\theoremstyle{plain}
\newtheorem{theorem}{Theorem}[section]
\newtheorem{lemma}[theorem]{Lemma}
\newtheorem{proposition}[theorem]{Propostion}
\newtheorem{corollary}[theorem]{Corollary}
\newtheorem*{corollary*}{Corollary}
\theoremstyle{definition}
\theoremstyle{remark}
\newtheorem{remark}[theorem]{Remark}
\numberwithin{equation}{section}
\begin{document}

\title[Elements Attaining Norm in a Finite-Dimensional Representation]{Elements of $C^*$-algebras Attaining Their Norm in a Finite-Dimensional Representation}

\author{Kristin Courtney}
\address{Department of Mathematics, University of Virginia, Charlottesville, VA}
\email{kc2ea@virginia.edu}

\author{Tatiana Shulman}
\address{Department of Mathematical Physics and Differential Geometry, Institute of Mathematics of Polish Academy of Sciences, Warsaw}
\thanks{The research of the
second-named author was supported by the Polish National Science Centre grant
under the contract number DEC- 2012/06/A/ST1/00256 and from the Eric Nordgren
Research Fellowship Fund at the University of New Hampshire.}

\subjclass[2010]{46L05; 46L67}



\keywords{AF-telescopes, RFD, Projective}

\begin{abstract}
We characterize the class of RFD $C^*$-algebras as those containing a dense subset of elements that attain their norm under a finite-dimensional representation. We show further that this subset is the whole space precisely when every irreducible representation of the $C^*$-algebra is finite-dimensional, which is equivalent to the $C^*$-algebra having no simple infinite-dimensional AF subquotient.
We apply techniques from this proof to show the existence of elements in more general classes of $C^*$-algebras whose norms in finite-dimensional representations fit certain prescribed properties.
 \end{abstract}

\maketitle

\section{Introduction}

Information about finite-dimensional representations of a $C^*$-algebra is useful for studying its structural properties. RFD $C^*$-algebras are those which have many finite-dimensional representations. 
Recall that a $C^*$-algebra is called {\it residually finite-dimensional} (RFD) if it has a separating family of finite-dimensional representations.

One of the first results on RFD $C^*$-algebras, due to Choi (\cite{C}), is the fact that the full $C^*$-algebra $C^*(\mathbb{F}_n)$ of the free group  is RFD (\cite{C}). In the ensuing years, various characterizations of RFD $C^*$-algebras have been obtained (notably in \cite{EL, Ar, Don}), and various classes of $C^*$-algebras were proved to be RFD. A notable class of RFD $C^*$-algebras are those whose irreducible representations are all finite-dimensional. We call such $C^*$-algebras {\it FDI} for ``Finite-Dimensional Irreps''. This class includes, in particular, ($n$-)subhomogeneous $C^*$-algebras.

  Examples of RFD $C^*$-algebras arising from groups include full group $C^*$-algebras of amenable maximally periodic groups \cite{Bekka}, surface groups and fundamental groups of closed hyperbolic 3-manifolds that fiber over the circle \cite{Shalom}, and many 1-relator groups with non-trivial center \cite{HadwinShulman}. Other classes of RFD $C^*$-algebras include amalgamated products of commutative $C^*$-algebras \cite{Korchagin}, projective $C^*$-algebras \cite{L}, universal  $C^*$-algebras of algebraic elements \cite{LoringShulman},  the soft torus $C^*$-algebra \cite{EE}, and certain just-infinite $C^*$-algebras \cite{GMR}. (This list is certainly incomplete.)
The class of RFD $C^*$-algebras is also closed under free products \cite{EL} (see also  \cite{GM}), minimal tensor products \cite{BO}, extensions, and subalgebras.

In \cite{FNT} Fritz, Netzer and Thom  proved that every element in the group algebra $\mathbb{CF}_n$ attains its universal norm under some finite-dimensional unitary representation. Viewing $\mathbb{CF}_n$ as a dense subalgebra of $C^*(\mathbb{F}_n)$, it is natural to ask whether there exists in other RFD $C^*$-algebras a dense subset of elements that attain their norm under a finite-dimensional representation. In section \ref{section3}, we prove that this is indeed true. Moreover, this characterizes RFD $C^*$-algebras (Corollary \ref{RFDcor}).

Looking at the result of Fritz, Netzer and Thom, one can ask further questions. For instance, are there elements in $C^*(\mathbb{F}_n)$ other than the elements of $\mathbb{CF}_n$ that attain their norm under a finite-dimensional representation? Could this be true for all elements?

In Section \ref{section4}, we prove that all elements of a $C^*$-algebra attain their norm under a finite-dimensional representation if and only if the $C^*$-algebra has no infinite-dimensional irreducible representation, i.e the $C^*$-algebra is FDI (Theorem \ref{FDI}). In particular, this implies the existence of elements in $C^*(\mathbb{F}_n)$ that do not attain their norm under a finite-dimensional representation. Moreover, we show that $A$ is FDI if and only if $A$ has no $C^*$-subalgebra which surjects onto some simple, infinite-dimensional AF-algebra.

In Section \ref{section5},  we introduce  seminorms associated with finite-dimensional representations and study their growth. Namely, for a $C^*$-algebra $A$ with at least one irreducible representation of dimension no larger than $k<\infty$, we define a seminorm $\|\cdot\|_{\mathbb{M}_k}$ at each $a\in A$ by
\begin{align*}
    \|a\|_{\mathbb{M}_k}=\sup\{\|\pi(a)\|\},
\end{align*}
 where the supremum is taken over all irreducible representations of dimension no larger than $k$. If $A$ has irreducible representations of dimensions $n_1<n_2<...<\infty$, \footnote{Our results, when relevant, also hold for $C^*$-algebras for which this sequence is finite.}
 then for each $a\in A$, we have a non-decreasing sequence
$$(\|a\|_{\mathbb{M}_{n_k}})_{k\in \mathbb{N}}.$$ Let $\Lambda(A)$ be the set of all such sequences. We want to know what sequences can be found in $\Lambda(A)$ for a given $C^*$-algebra $A$. 
In Theorem \ref{1} we prove that $\Lambda(A)$ contains the set of all nondecreasing sequences of positive numbers which are eventually constant. 
 We show that those two sets coincide exactly when $A$ is FDI (Corollary \ref{3}). When $A$ is RFD but not FDI, we describe the behavior of some sequences in $\Lambda(A)$ that are not eventually constant (Theorem \ref{RFDnotFDI}).

A technique developed in Section \ref{section4} allows us to say more about the subset of all elements which attain their norm under a finite-dimensional representation. Particularly, in Theorem \ref{subspace} we prove that this subset is additively closed if and only if it is multiplicatively closed if and only if the $C^*$-algebra is FDI. In particular, it implies that there exist elements in $C^*(\mathbb{F}_n)\backslash \mathbb {CF}_n$ that attain their norm under a finite-dimensional representation. 

One of our main tools in Sections \ref{section4} and \ref{section5} is the projectivity of AF-telescopes, discovered by Loring and Pedersen \cite{LP}. For a simple AF-algebra, it is often straightforward to find elements in its AF-telescope whose norms under finite-dimensional representations fit certain prescribed properties. When projectivity can be invoked, it can be sometimes used to lift said elements to elements in another $C^*$-algebra whose norms under finite-dimensional representations fit the same properties. 

 If we know an element in a $C^*$-algebra attains its norm in some finite-dimensional representation, it is natural to then ask for an upper bound for the dimension required to witness this. In their proof that any element of $\mathbb{CF}_n$  achieves its norm in a finite-dimensional representation, Fritz, Netzer, and Thom give an estimate of the dimension of such a representation (\cite{FNT}, Lemma 2.7). If $\ell$ is the length of the longest word in the support of the element, then such a  representation can be chosen of dimension no more than $4n^\ell$.  In Section \ref{section5}, we find a better bound on the dimension for binomials in $\mathbb{CF}_n$. In Theorem \ref{eigenvalue} we prove that for any nontrivial, balanced, reduced word $ w\in \mathbb{F}_n$ of length $\ell$ and any $\lambda\in \mathbb{T}$, there exists a representation $\pi:C^*(\mathbb{F}_n)\to \mathbb{M}_{2\ell}$ such that the spectrum of $\pi(w)$ contains $\lambda$. From this theorem  we deduce (Proposition \ref{dimestimate}) that any element of the form $\alpha w_1+\beta w_2$, where  $\alpha,\beta\in \mathbb{C}$ and $w_1,w_2\in\mathbb{F}_n$, attains its norm under a $2\ell$-dimensional representation of $C^*(\mathbb{F}_2)$; here $\ell$ is the length of the reduced word $w_2^{-1}w_1$.

We are grateful to David Sherman for many useful discussions.


\section{Preliminaries}\label{section1}
\subsection{AF Mapping Telescopes and Projective $C^*$-algebras}

We briefly introduce AF mapping telescopes (also called AF-telescopes); for more information, see \cite{L} or \cite{LP}. 

Let $A=\overline{\bigcup{A_n}}$ be an inductive limit of an increasing sequence of $C^\ast$-algebras $$A_1 \subset A_2 \subset \ldots \subset A$$
with injective connecting maps. We define the {\it mapping telescope} of $(A_n)$ as the $C^\ast$-algebra
\begin{align*}
    T(A)=\{f\in C_0((0,\infty],A)| f(t)\in A_{\lceil t \rceil} \ \forall t\in(0,\infty)\}
\end{align*}
where $\lceil t\rceil=\min\{n\in \mathbb{N}: n\geq t\}$. 
Obviously the mapping telescope depends on the sequence $(A_n)$, but we will use the notation $T(A)$ as opposed to $T(A_1,A_2,...)$ and specify the inductive sequence when necessary. In particular, we denote by $T(\mathbb{M}_{2^\infty})$ the mapping telescope corresponding to the inductive sequence 
\begin{align*}
    \mathbb{M}_2\subset\mathbb{M}_4\subset...\subset \mathbb{M}_{2^n}\subset...\subset \mathbb{M}_{2^\infty}
\end{align*}
where $\mathbb{M}_{2^n}$ is identified with a subalgebra of $\mathbb{M}_{2^{n+1}}$ by the map $a\mapsto a\oplus a$. Recall that $\mathbb{M}_{2^\infty}$ is referred to as the CAR algebra and is a simple $C^\ast$-algebra (\cite{Davidson}). We denote by $T(\mathbb{K}(\ell^2))$ the mapping telescope corresponding to the inductive sequence
$$\mathbb{C}\subseteq \mathbb{M}_2\subseteq ...\subseteq \mathbb{M}_n\subseteq...\subseteq \mathbb{K}(\ell^2)$$
with embeddings $a\mapsto a\oplus 0$.
When each $A_n$ is finite-dimensional, the $C^\ast$-algebra $A$ is AF and we call $T(A)$ an {\it AF-telescope}.

For the sake of consistency, we define the AF-telescope for inductive sequences of the form 
$$\mathbb{M}_{n_1}\subset  \mathbb{M}_{n_2} \subset  \ldots \subset \mathbb{M}_{n_N},$$
as
\begin{align*}
    T(\mathbb{M}_{n_N}):=\{f\in C_0((0,\infty],\mathbb{M}_{n_N})| f(t)\in \mathbb{M}_{n_{\lceil t \rceil}} \ \forall t\in(0,n_N]\}.
\end{align*}

Recall that a $C^*$-algebra $A$ is {\it projective} (\cite{B1}, \cite{L}) if given $C^\ast$-algebras $B$ and $C$ with surjective $\ast$-homomorphism $q:B\to C$, any $\ast$-homomorphism $\phi:A\to C$ lifts to a $\ast$-homomorphism $\psi:A\to B$ such that $q\circ\psi=\phi$. In other words, we have the following commutative diagram.
\[
\begin{tikzcd}
& B\arrow[two heads]{d}{q}\\
A\arrow{ur}{\psi}\arrow{r}[swap]{\phi} & C
\end{tikzcd}
\]

In \cite{LP}, Loring and Pedersen proved that all AF-telescopes are projective. This fact will be used repeatedly throughout the paper.

\subsection{Type I and GCR $C^\ast$-algebras}\label{GCR}
 In sections \ref{section3} and \ref{section4}, we rely on a result (Theorem \ref{non-GCR}) of Glimm and Sakai. The particular formulation we would like to cite is not so readily found in the literature, so we briefly describe it here.
 
Let $\mathcal{H}$ be a Hilbert space. 
A $C^\ast$-algebra $A$ is called {\it GCR} if $K(\mathcal{H})\subseteq \pi(A)$ for any irreducible representation $(\pi,\mathcal{H})$ of $A$. 
In particular, all FDI $C^*$-algebras are GCR. 
 It is due to a deep theorem of Glimm and Sakai that a $C^\ast$-algebra is GCR iff it is type I (see \cite{G} for the classic theorem and \cite{S2} for the nonseparable case). 
 We will call all such algebras GCR.

A $C^\ast$-algebra is {\it NGCR (antiliminal)} if it contains no nonzero abelian elements, i.e. there is no nonzero $ x\in A$ so that $\overline{x^*A_0x}$ is commutative. 
 Glimm (\cite{G}) and Sakai (\cite{S1}) have shown that an NGCR $C^\ast$-algebra must have a subquotient isomorphic to the CAR algebra, i.e. it has a subalgebra that surjects onto the CAR algebra. Since a GCR $C^*$-algebra is characterized as having no NGCR quotients (see \cite[Section IV.1.3]{B}), we arrive at the following formulation of the result.
 
\begin{theorem}[\cite{G}, \cite{S1}]\label{non-GCR}
Let $A$ be a $C^\ast$-algebra that is not GCR. Then $A$ has a subquotient isomorphic to the CAR algebra. 
\end{theorem}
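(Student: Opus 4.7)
The plan is essentially a two-step diagram chase, since both genuinely nontrivial inputs are the Glimm--Sakai results already recalled in the paragraph above the theorem.

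First, I would use the characterization of GCR stated in the excerpt (namely, $A$ is GCR if and only if it has no nonzero NGCR quotient). Since $A$ is assumed not GCR, this yields a (possibly non-closed, but we can always close it) two-sided ideal $I \trianglelefteq A$ with $I \neq A$ such that the quotient $A/I$ is a nonzero NGCR $C^*$-algebra.

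Second, I would apply the Glimm--Sakai dichotomy (also recalled just above the theorem) to the NGCR algebra $A/I$: this produces a $C^*$-subalgebra $B \subseteq A/I$ together with a surjective $*$-homomorphism $\sigma : B \twoheadrightarrow \mathbb{M}_{2^\infty}$ onto the CAR algebra.

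Third, I would pull $B$ back to $A$. Let $q : A \to A/I$ be the quotient map and set $\widetilde{B} := q^{-1}(B)$. Then $\widetilde{B}$ is a $C^*$-subalgebra of $A$ (it is closed since $q$ is continuous and $B$ is closed, and it is a $*$-subalgebra because $q$ is a $*$-homomorphism). The restriction $q|_{\widetilde{B}} : \widetilde{B} \to B$ is surjective by construction, so the composition
\[
\sigma \circ q|_{\widetilde{B}} : \widetilde{B} \twoheadrightarrow \mathbb{M}_{2^\infty}
\]
exhibits the CAR algebra as a quotient of a $C^*$-subalgebra of $A$, i.e.\ as a subquotient of $A$, which is exactly the claim.

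There is no real obstacle here: the depth lies entirely in the two theorems of Glimm and Sakai being cited, and the bookkeeping above is routine. The only minor care needed is to verify that $q^{-1}(B)$ really is a $C^*$-subalgebra of $A$ and that surjectivity transfers through the restriction, both of which are standard.
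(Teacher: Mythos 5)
Your argument is correct and is essentially the paper's own: the theorem is derived in the paragraph immediately preceding it by combining the characterization of GCR $C^\ast$-algebras as those with no NGCR quotients with the Glimm--Sakai result that every NGCR $C^\ast$-algebra has a subalgebra surjecting onto the CAR algebra, and your pullback of that subalgebra through the quotient map $q$ is the same routine bookkeeping. The only tiny imprecision is the parenthetical about a ``possibly non-closed'' ideal: the characterization already furnishes a closed two-sided ideal, so no closing is needed.
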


\section{A Characterization of RFD $C^\ast$-algebras}\label{section3}
In this section, we characterize RFD $C^\ast$-algebras as being exactly those which have a dense subset of elements that attain their norm under a finite-dimensional representation. In fact, we prove that, for any residually class $\mathcal{C}$ $C^\ast$-algebra (i.e. an algebra with a separating family of representations which are class $\mathcal{C}$) the set of elements that attain their norm under a class $\mathcal{C}$ representations is dense.

First, we give a well-known characterization for a family of representations to be separating.

\begin{lemma}\label{sup} Let $A$ be a $C^\ast$-algebra and $\mathcal F$ be a separating family of its representations.
Then for each $a\in A$,
 $$\|a\|= \sup_{\pi\in \mathcal F} \|\pi(a)\|.$$
\end{lemma}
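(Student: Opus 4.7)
The plan is to form the direct sum representation $\Pi := \bigoplus_{\pi \in \mathcal F} \pi$ acting on $\bigoplus_{\pi \in \mathcal F} \mathcal H_\pi$. Since every $*$-homomorphism between $C^*$-algebras is contractive, each $\|\pi(a)\| \le \|a\|$, so the direct sum converges pointwise to a well-defined bounded operator with $\|\Pi(a)\| = \sup_{\pi \in \mathcal F} \|\pi(a)\|$. This immediately gives the easy inequality $\sup_{\pi \in \mathcal F} \|\pi(a)\| \le \|a\|$.

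For the reverse inequality, I would invoke the standard fact that an injective $*$-homomorphism between $C^*$-algebras is isometric. The separating assumption on $\mathcal F$ means precisely that $\ker \Pi = \bigcap_{\pi \in \mathcal F} \ker \pi = 0$, so $\Pi$ is injective and therefore isometric. Hence $\|a\| = \|\Pi(a)\| = \sup_{\pi \in \mathcal F} \|\pi(a)\|$.

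The only mild technical issue is making sense of the direct sum when $\mathcal F$ is large (possibly a proper class). This is handled by noting that for each $a$ the supremum $\sup_\pi \|\pi(a)\|$ is already bounded by $\|a\|$, so the coordinatewise direct sum defines a bounded operator on the Hilbert-space direct sum indexed by $\mathcal F$; alternatively, one can first pass to a set of representatives up to unitary equivalence since $\|\pi(a)\|$ depends only on the equivalence class of $\pi$. The isometry-of-injective-$*$-homomorphisms step is the substantive ingredient, but it is a classical result (a consequence of the continuous functional calculus and the fact that the spectrum is preserved by injective $*$-homomorphisms), so no real obstacle arises.
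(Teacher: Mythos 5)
Your proposal is correct and is essentially the paper's own argument: form the direct sum representation $\oplus_{\pi\in\mathcal F}\pi$, note that the separating hypothesis makes it injective, and conclude it is isometric, with the easy inequality coming from contractivity of $*$-homomorphisms. The extra care you take about large index families and equivalence classes is fine but not needed for the paper's purposes.
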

\begin{proof} Since $\mathcal F$ is separating, the representation $a\mapsto \oplus_{\pi\in \mathcal F} \pi(a)$ is injective. Hence it is isometric.
\end{proof}

\begin{theorem}\label{RFD}
Let $A$ be a $C^\ast$-algebra,  $\mathcal{F}$ a family of representations of $A$, and define
\begin{align*}
    A_\mathcal{F}:=\{a\in A\ |\ \|a\|=\max_{\pi\in \mathcal{F}}\|\pi(a)\|\}.
\end{align*}
Then the following are equivalent:
\begin{enumerate}
    \item[(i)] $A_\mathcal{F}$ is dense in $A$.
    \item[(ii)] $\mathcal{F}$ is a separating family of representations of $A$.
\end{enumerate}
\end{theorem}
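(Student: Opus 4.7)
The plan is to handle both directions, with the substance lying in (ii)$\Rightarrow$(i). For (i)$\Rightarrow$(ii), I would argue directly: if $A_\mathcal{F}$ is dense and $a\in A$ lies in the common kernel of every $\pi\in\mathcal{F}$, approximating $a$ by $b_n\in A_\mathcal{F}$ gives
\begin{align*}
\|b_n\| = \max_{\pi\in\mathcal{F}}\|\pi(b_n)\| = \max_{\pi\in\mathcal{F}}\|\pi(b_n-a)\| \leq \|b_n-a\| \to 0,
\end{align*}
so $b_n\to 0$, forcing $a=0$ and showing $\mathcal{F}$ separates points.

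For (ii)$\Rightarrow$(i), fix a nonzero $a\in A$ and $\epsilon>0$, set $M=\|a\|$, and use Lemma \ref{sup} to choose $\pi\in\mathcal{F}$ with $N:=\|\pi(a)\|>M-\epsilon$. The strategy is to perturb $a$ via continuous functional calculus in $a^*a$ so as to cap its norm at exactly $N$ while keeping $\|\pi(\,\cdot\,)\|$ equal to $N$. Concretely, I would let $h\colon[0,M^2]\to(0,1]$ be the continuous function defined by $h(t)=1$ on $[0,N^2]$ and $h(t)=N/\sqrt{t}$ on $[N^2,M^2]$, and set $b:=a\,h(a^*a)$. Since $1-h$ vanishes at $0$, the identity $b=a-a(1-h)(a^*a)$ shows that $b$ lives in $A$ itself, not merely in the unitization.

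The computation then runs as follows. Using $b^*b=h(a^*a)^2 a^*a$ and the fact that $h(t)^2 t=\min(t,N^2)$, the spectral theorem yields $\|b\|^2=\max_{t\in\sigma(a^*a)}\min(t,N^2)=N^2$, since $M^2\in\sigma(a^*a)$ with $M^2\geq N^2$. The same computation applied to $\pi(b)=\pi(a)\,h(\pi(a)^*\pi(a))$, together with $N^2\in\sigma(\pi(a)^*\pi(a))$, gives $\|\pi(b)\|=N$. Hence $\|b\|=\|\pi(b)\|=N$, and since representations are contractive this places $b$ in $A_\mathcal{F}$. For closeness, on $[N^2,M^2]$ one has $|1-h(t)|=1-N/\sqrt{t}\leq 1-N/M$, so
\begin{align*}
\|b-a\|\leq\|a\|\,\|(1-h)(a^*a)\|\leq M(1-N/M)=M-N<\epsilon.
\end{align*}

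The main delicacy is the design of $h$: it must simultaneously cap $\|b\|$ at $N$, leave $N$ in the spectrum of $|\pi(b)|$, and differ from $1$ only on $[N^2,M^2]$ by a controlled amount. The chosen $h$ works precisely because $h(t)^2 t$ is the cap $\min(t,N^2)$, which aligns all three requirements; Lemma \ref{sup} then closes the loop by letting $M-N$ be made arbitrarily small.
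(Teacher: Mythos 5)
Your proof is correct and follows essentially the same route as the paper: in (ii)$\Rightarrow$(i) your element $b = a\,h(a^*a)$ is exactly the paper's $b = u f(|a|) = a\,g(|a|)$ with $g(t)=\min\{1,\|\pi(a)\|/t\}$, the only difference being that you compute $\|b\|$, $\|\pi(b)\|$, and $\|a-b\|$ directly from $b^*b$ and the spectral mapping rather than via the polar decomposition of $a$ in $B(\mathcal{H})$, and your (i)$\Rightarrow$(ii) is the same approximation argument phrased as triviality of the common kernel. This is a cosmetic streamlining; the argument is sound.
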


In the proof, we use a trick with polar decomposition, which is folklore nowadays, but was first done in \cite{AP}.

\begin{proof}
If we assume (i), then for any $a\in A\backslash\{0\}$, we can choose $b\in A_\mathcal{F}\backslash\{0\}$ such that $\|a-b\|<\frac{1}{4}\|a\|$ and $\pi\in\mathcal{F}$ so that $\|\pi(b)\|=\|b\|$. Then, 
\begin{align*}
    \|a\|-\|\pi(a)\|=|\|a\|-\|b\|+\|\pi(b)\|-\|\pi(a)\||\leq \|a-b\|+\|\pi(b-a)\|<\frac{1}{2}\|a\|.
\end{align*}
Hence, $0<\frac{1}{2}\|a\|<\|\pi(a)\|$, i.e. $\mathcal{F}$ is a separating family of representations.

Now, assume (ii), and let $a\in A\backslash\{0\}$ and $\epsilon>0$. By Lemma \ref{sup} there exists  $\pi\in \mathcal{F}$ such that $\|a\|\leq \|\pi(a)\|+\epsilon$.
Embed $\tilde{A}$ into $B(\mathcal{H})$ for some $\mathcal{H}$, where $\tilde{A}$ is the unitization of $A$, and let
\begin{align*}
    a=u|a|
\end{align*}
be the polar decomposition of $a$ in $B(\mathcal{H})$. Define a function $f:\mathbb{R}^+\to \mathbb{R}^+$ by 
 \[
   f(t) = \left\{
     \begin{array}{lcl}
       t &;& t\in [0,\|\pi(a)\|]\\
       &&\\
       \|\pi(a)\| &;&  t\in(\|\pi(a)\|,\infty)
     \end{array}
   \right.
\]
 Let $
    b=uf(|a|).$
We claim that $b\in A_\mathcal{F}$ and $\|b-a\|<\epsilon$. First, note that $b\in A$. Indeed,  $f(t)=tg(t)$ where 
 \[
   g(t) = \left\{
     \begin{array}{lcl}
       1 &;&  t\in [0,\|\pi(a)\|]\\
       &&\\
        \displaystyle \frac{\|\pi(a)\|}{t} &;&  t\in(\|\pi(a)\|,\infty)
     \end{array}
   \right.
\]
Then $g$ is continuous on $[0,\infty)$, and $g(|a|)\in \tilde{A}$. Hence $b=uf(|a|)=ag(|a|)\in A$ since $A$ is an ideal in $\tilde{A}$.

To show that $b\in A_\mathcal{F}$, it will suffice to show that $\|b\|\leq \|\pi(b)\|$. If $A$ is non-unital,  let $\pi'$ denote the unique unital extension of $\pi$ to $\tilde{A}$, and if $A$ is unital, let $\pi' = \pi$.
Then, since $g(t)=1$ when $t\in [0,\|\pi(a)\|]$, we have that $\pi'(g(|a|))=g(\pi(|a|))=1$ in $\pi'(\tilde{A})$ and hence
\begin{align*}
    \pi(b)=\pi(ag(|a|))=\pi(a)g(\pi(|a|))=\pi(a).
\end{align*}
This gives us that
\begin{align*}
    \|b\| \leq  \|f(|a|)\|=\sup_{t\in\sigma(|a|)}|f(t)|\leq \|\pi(a)\|=\|\pi(b)\|,
\end{align*}
Finally, 
\begin{align*}
    \|a-b\|&=\|u|a|-uf(|a|)\|\\
    &\leq \||a|-f(|a|)\|\\
    &=\sup_{t\in \sigma(|a|)}|t-f(t)|\\
    &\leq \|a\|-\|\pi(a)\|\leq\epsilon.
\end{align*}
Hence, $A=\overline{A_\mathcal{F}}$.
\end{proof}

\begin{corollary}\label{RFDcor}
The following are equivalent for a $C^\ast$-algebra $A$:
\begin{enumerate}
    \item[(i)] The set $\displaystyle\{a\in A: \|a\|=\max_{\substack{\pi\in \text{Irr}_n(A)\\ n<\infty}}\|\pi(a)\|\}$
    is dense in $A$. 
    \item[(ii)] $A$ is RFD.
\end{enumerate}
\end{corollary}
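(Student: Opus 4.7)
The plan is to derive this as a direct application of Theorem \ref{RFD} with the family $\mathcal{F}$ taken to be the collection of all finite-dimensional irreducible representations of $A$, that is, $\mathcal{F} = \bigcup_{n<\infty}\mathrm{Irr}_n(A)$. With this choice, the set appearing in condition (i) of the corollary is literally the set $A_\mathcal{F}$ defined in Theorem \ref{RFD}. Hence the corollary reduces to showing that $\mathcal{F}$ is a separating family if and only if $A$ is RFD.

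One direction is essentially a tautology: if $\mathcal{F}$ separates points of $A$, then $A$ has a separating family of finite-dimensional representations, so $A$ is RFD by definition. For the converse, I would use the standard fact that every finite-dimensional $*$-representation of a $C^*$-algebra decomposes as a finite direct sum of (necessarily finite-dimensional) irreducible subrepresentations. Therefore, given a separating family of finite-dimensional representations witnessing that $A$ is RFD, the collection of all irreducible summands drawn from this family is itself separating and consists entirely of finite-dimensional irreducible representations. This produces a separating subfamily of $\mathcal{F}$, so $\mathcal{F}$ itself is separating.

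Putting these pieces together, Theorem \ref{RFD} yields the chain of equivalences
\begin{equation*}
\text{(i)} \iff A_\mathcal{F} \text{ is dense in } A \iff \mathcal{F} \text{ is separating} \iff A \text{ is RFD},
\end{equation*}
which is exactly the statement of the corollary. There is no real obstacle here; the only non-formal step is the decomposition of finite-dimensional representations into irreducibles, which is classical.
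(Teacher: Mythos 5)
Your proposal is correct and is essentially the intended argument: the paper states the corollary as an immediate consequence of Theorem \ref{RFD} applied to the family of finite-dimensional (irreducible) representations, and your bridging step --- that a separating family of finite-dimensional representations can be replaced by the separating family of their irreducible summands, so that ``$\mathcal{F}$ separating'' is equivalent to ``$A$ is RFD'' --- is exactly the standard fact needed to close the gap.
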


A natural question now is how to characterize the class of  $C^\ast$-algebras for which every element attains its norm under some finite-dimensional representation.
For example, is this true for $C^\ast(\mathbb{F}_n)$?

It turns out that the answer is ``no" for any $C^\ast$-algebra that has an infinite-dimensional irreducible representation, including $C^*(\mathbb{F}_n)$. We will address this in the next section. 

\section{A Characterization of FDI $C^\ast$-algebras}\label{section4}

We begin with a key lemma that is intuitively clear and must be known to specialists.

\begin{lemma}\label{pointeval} Let  $T(B)$ be an AF-telescope with associated inductive sequence $(B_n)$. Then any  irreducible representation $(\pi,\mathcal{H})$ of $T(B)$ factorizes through a point evaluation $ev_t$, for some $t\in (0, \infty]$. Moreover, when $B_n$ are all simple and distinct, if $\dim\mathcal{H}\leq \dim B_n$ for some $n$, then $t\leq n$.
\end{lemma}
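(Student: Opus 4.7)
The plan is to exploit the canonical $C_0((0,\infty])$-algebra structure on $T(B)$: each scalar function $\phi\in C_0((0,\infty])$ acts as a central multiplier of $T(B)$ via pointwise multiplication $(\phi\cdot f)(t)=\phi(t)f(t)$, and this action is non-degenerate (take $\phi_n(t)=\min(nt,1)$ and note $\phi_n\cdot f\to f$ for each $f\in T(B)$, using that $f$ vanishes at $0^+$). Given an irreducible representation $(\pi,\mathcal{H})$, which is automatically non-degenerate, extend $\pi$ uniquely to $\tilde\pi$ on $M(T(B))$. For $\phi\in C_0((0,\infty])$, $\tilde\pi(\phi)$ commutes with $\pi(T(B))$, so by Schur's lemma $\tilde\pi(\phi)\in\mathbb{C}I$. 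This yields a character on $C_0((0,\infty])$, necessarily nonzero (else $\pi=0$), hence evaluation at a unique $t_0\in(0,\infty]$.

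Next, I would show $\pi$ annihilates $\ker(ev_{t_0})$: given $f\in T(B)$ with $f(t_0)=0$ and $\epsilon>0$, pick a neighborhood $U$ of $t_0$ where $\|f(t)\|<\epsilon$, and choose $\phi\in C_0((0,\infty])$ with $\phi(t_0)=0$ and $\phi\equiv 1$ off $U$. Then $\|f-\phi\cdot f\|\leq\epsilon$ while $\pi(\phi\cdot f)=\phi(t_0)\pi(f)=0$, forcing $\pi(f)=0$. Since $ev_{t_0}$ is surjective onto $B_{\lceil t_0\rceil}$ when $t_0<\infty$ and onto $B$ when $t_0=\infty$, $\pi$ factors as $\sigma\circ ev_{t_0}$ for an irreducible representation $\sigma$ of the relevant quotient.

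For the final clause, when the $B_n$ are simple and distinct each one is a matrix algebra $\mathbb{M}_{d_n}$ with $d_1<d_2<\cdots$, and $B$ is UHF, hence simple and infinite-dimensional. In particular $B$ has no nonzero finite-dimensional irreducible representation (such a representation would have nontrivial kernel, contradicting simplicity). Thus $\dim\mathcal{H}<\infty$ forces $t_0<\infty$, and then $\pi$ is the unique irrep of $\mathbb{M}_{d_{\lceil t_0\rceil}}$, giving $\dim\mathcal{H}=d_{\lceil t_0\rceil}$; combined with $\dim\mathcal{H}\leq\dim B_n=d_n$ and strict monotonicity of $(d_k)$, this gives $\lceil t_0\rceil\leq n$, i.e., $t_0\leq n$. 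The main obstacle is setting up the central-multiplier machinery carefully (verifying non-degeneracy and that the approximation argument at $t_0$ works uniformly at an interior point and at the endpoint $t_0=\infty$); the rest is routine bookkeeping with the structure theory of UHF algebras.
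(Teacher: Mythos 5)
Your argument is correct in substance but takes a genuinely different route from the paper. You exploit the $C_0((0,\infty])$-algebra structure of $T(B)$: scalar functions act as central multipliers, Schur's lemma turns an irreducible $\pi$ into a character of $C_0((0,\infty])$, hence a point $t_0\in(0,\infty]$, and an approximation argument shows $\pi$ annihilates $\ker(ev_{t_0})$. The paper instead runs an ideal bootstrap: it splits off the ideal $I$ of functions vanishing at $\infty$ (if $\pi$ kills $I$ it factors through $ev_\infty$), writes $I=\overline{\bigcup_n I_n}$ with $I_n$ the functions vanishing on $[n,\infty]$, extends the irreducible restriction $\pi|_{I_n}$ to the truncated telescope $\tilde I_n$, and proves by induction on $n$ (peeling off $J_n\simeq C_0(n-1,n)\otimes B_n$) that irreducible representations of $\tilde I_n$ factor through point evaluations, finishing with the unique extension of irreducible representations from ideals. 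Your fiberwise argument is shorter and treats interior points and $t_0=\infty$ uniformly; the paper's is more elementary, avoiding multiplier algebras and the strict extension. For the final clause both proofs reduce to the same bookkeeping: each $B_k$ is a full matrix algebra $\mathbb{M}_{d_k}$ with $d_k$ strictly increasing, $B$ is simple and infinite-dimensional, so a finite-dimensional irreducible representation lives in a finite fiber and has dimension exactly $d_{\lceil t_0\rceil}$; note that ``$\dim\mathcal{H}\leq\dim B_n$'' must be read as $\dim\mathcal{H}\leq d_n$ (the matrix size), which is the reading consistent with how the lemma is applied later in the paper.

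Two small repairs are needed. First, your cutoff $\phi$ with $\phi(t_0)=0$ and $\phi\equiv 1$ off $U$ cannot lie in $C_0((0,\infty])$, since such functions must vanish as $t\to 0^+$. This is harmless: either let $\phi$ also drop to $0$ near $0$, where $\|f(t)\|<\epsilon$ anyway because $f\in C_0$, or extend the character strictly to $C_b((0,\infty])\subseteq M(T(B))$ and use a bounded cutoff. Second, $B$ need not be UHF: the connecting maps in an AF-telescope are injective but not necessarily unital (e.g. $T(\mathbb{K}(\ell^2))$ in the paper), so $B$ is in general a matroid algebra; it is nonetheless simple (a nonzero closed ideal meets some $B_n$, hence contains it, hence exhausts $B$ by simplicity of the later $B_m$ and injectivity of the connecting maps) and infinite-dimensional, which is all your argument uses.
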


\begin{proof} Let $\pi$ be an irreducible representation of $T(B)$. Put \begin{equation}\label{I} I=\{f\in T(B)| f(\infty)=0\}.\end{equation} Note that $I$ is a closed ideal in $T(B)$ and so $\pi|_I$ is either irreducible or zero. If it is zero, then $\pi$ factorizes through $T(B)/I \simeq B$ and hence  through $ev_{\infty}$. So we assume now  that $\pi|_I$ is non-zero and irreducible. For each $n\geq 1$, define the closed ideal $I_n\triangleleft I$ by
    \begin{align*}
        I_n&:=\{f\in I| f(t)=0\ \forall\ t\geq n\}
           \end{align*}
Then $(I_n)$ is a nested sequence of closed, two-sided ideals with $I=\overline{\bigcup_n I_n}$. Thus there must exist $n$ such that $\pi|_{I_n}$ is non-zero and therefore irreducible. Let  \begin{align*}
        \tilde I_n&:= \{f\in T(B)|\ f(t)=f(n)\ \forall\ t\geq n\}.
   \end{align*}

Then $I_n$ is an ideal in $\tilde I_n$, and so $\pi|_{I_n}$ extends uniquely to an irreducible representation (in fact $\pi|_{\tilde I_n}$)  of $\tilde I_n$. So, it will be sufficient to prove that any irreducible representation, say $\rho$,  of $\tilde I_n$ factorizes through a point evaluation. 

We will prove it by induction. Clearly it holds for $\tilde{I}_1\simeq C_0(0,1]\otimes B_1$. Assume it holds for $(n-1)$. 
Let $J_n$ be the closed ideal in  $\tilde{I}_n$ defined by 
\begin{align*}
    J_n &= \{f\in \tilde I_n\;|\; f(t) =0 \; \text{for all}\; t\notin [n-1, n]\}\\
    &\simeq C_0(n-1, n)\otimes B_n.
\end{align*} 
If $\rho$ does not vanish on $J_n$, then it is irreducible on $J_n$ and hence factorizes through a point evaluation.
So we can assume  $\rho$ vanishes on $J_n$. Then $\rho$ factorizes through the map $\tilde I_n \to \tilde I_n/J_n \cong \tilde I_{n-1}$ given by the restriction $f\mapsto f|_{[0, n-1]}$ and hence $\rho$ factorizes through a point evaluation by the induction hypothesis.

Thus, $\pi|_{I_n}$ factorizes through a point evaluation.  
 Since an irreducible representation of an ideal extends uniquely to a representation of the whole $C^*$-algebra, we conclude that $\pi$ factorizes through a point evaluation. 
 
 Moreover, if each $B_n$ is simple, then any irreducible representation of $T(B)$ is equivalent to a point evaluation $ev_t$ for some $t\in (0,\infty]$, in which case the image of the representation is isomorphic to $B_{\lceil t\rceil}$.
\end{proof}

\begin{remark}
Recall that a $C^*$-algebra is {\it (n-)subhomogeneous} if all of its irreducible representations are of bounded finite dimension. Clearly any subhomogenous $C^*$-algebra is FDI, but there exist many FDI $C^*$-algebras that are not subhomogeneous. For instance, if $B$ is a UHF algebra or $K(\ell^2)$, then $I$ in (\ref{I}) is not subhomogeneous. 

More such examples come from group theory. In \cite{M}, Moore 
proves that a locally compact group has a finite bound for
the dimensions of its irreducible unitary representations if and only if it has an open
abelian subgroup of finite index. On the other hand, he also shows in \cite{M} that a locally compact group has all of its irreducible unitary representations of finite dimension if and only if it is a projective limit of Lie groups with the same property; and a Lie group has this property if and only if it has an open subgroup of finite index that is compact modulo its center. Consequently, examples of FDI but non-subhomogeneous $C^*$-algebras include, for instance, the full group $C^*$-algebra of a locally compact Lie group whose irreducible representations are all finite-dimensional but which has no open abelian subgroups of finite index. 

On the other hand, if $G$ is a discrete group, Thoma shows in \cite{T1} and \cite{T2} that all irreducible unitary representations of $G$ are finite-dimensional iff they are all of bounded finite dimension iff the group is type I iff the group is virtually abelian. In other words, for a discrete group $G$, the following are equivalent:
\begin{enumerate}
    \item $C^\ast(G)$ is subhomogeneous
    \item $C^*(G)$ is FDI
    \item $C^*(G)$ is GCR
    \item $G$ is virtually abelian.
\end{enumerate} 
\end{remark}

\begin{lemma}\label{example}
For any simple, infinite-dimensional AF-algebra $B$ with inductive sequence $(B_n)$, there is an element $f\in T(B)$ such that $\|\pi(f)\|<\|f\|=\|f(\infty)\|$ for any finite-dimensional representation $\pi$ of $T(B)$. 
\end{lemma}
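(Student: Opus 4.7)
The plan is to apply Lemma~\ref{pointeval} to reduce the problem to a purely analytic statement about a function in $T(B)$. A finite-dimensional representation $\pi$ of $T(B)$ decomposes as a finite direct sum of finite-dimensional irreducible subrepresentations $\pi_j$, and by Lemma~\ref{pointeval} each $\pi_j$ factors as $\pi_j = \sigma_j \circ ev_{t_j}$ for some $t_j \in (0, \infty]$. Since $B$ is simple and infinite-dimensional, any nonzero representation of $B$ is injective and thus has infinite-dimensional image, so no $\pi_j$ can factor through $ev_\infty$, forcing $t_j < \infty$ for every $j$. As only finitely many such $t_j$ arise,
\[
\|\pi(f)\| = \max_j \|\sigma_j(f(t_j))\| \leq \max_j \|f(t_j)\|.
\]
It therefore suffices to construct $f \in T(B)$ with $\|f\| = \|f(\infty)\|$ and $\|f(t)\| < \|f(\infty)\|$ for every finite $t$, since a maximum over finitely many values each strictly below $\|f(\infty)\|$ is itself strictly below $\|f(\infty)\|$.

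\textbf{Construction of $f$.} Fix $a \in B$ with $\|a\| = 1$, and pick $c_n \in B_n$ with $c_n \to a$. To force strict sub-unit norms, set
\[
b_n := \frac{1 - 1/n}{\max(1, \|c_n\|)}\, c_n \in B_n,
\]
so that $\|b_n\| \leq 1 - 1/n < 1$ for every $n$, while still $b_n \to a$. Define $f : (0, \infty] \to B$ by $f(t) = t\,b_1$ on $(0, 1]$, by linear interpolation $f(t) = (n+1-t)b_n + (t-n)b_{n+1}$ on $[n, n+1]$ for $n \geq 1$, and $f(\infty) = a$. The inclusion $b_n \in B_n \subset B_{n+1}$ ensures $f(t) \in B_{\lceil t \rceil}$; continuity and $f(t) \to 0$ as $t \to 0^+$ then place $f$ in $T(B)$. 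Convexity of the norm gives $\|f(t)\| < 1$ for every $t \in (0, \infty)$, while $b_n \to a$ forces $\|f(t)\| \to 1$ as $t \to \infty$, hence $\|f\| = 1 = \|f(\infty)\|$.

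\textbf{Combination and main obstacle.} Putting the two pieces together yields $\|\pi(f)\| < 1 = \|f\| = \|f(\infty)\|$ for every finite-dimensional representation $\pi$, which is the desired conclusion. The only moderately delicate point is arranging approximants $b_n \to a$ of \emph{strictly} sub-unit norm (rather than merely $\|b_n\| \to 1$); the rescaling above is what handles this. Everything else follows mechanically once Lemma~\ref{pointeval} has been invoked and the simplicity and infinite-dimensionality of $B$ are used to exclude $t_j = \infty$.
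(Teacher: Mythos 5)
Your proposal is correct and follows essentially the paper's approach: it reduces, via Lemma \ref{pointeval} together with the fact that a simple infinite-dimensional $B$ admits no finite-dimensional representations, to exhibiting an $f\in T(B)$ with $\|f(t)\|<\|f(\infty)\|=\|f\|$ for every finite $t$, and then uses that a finite-dimensional representation sees only finitely many finite point evaluations. The only difference is the choice of $f$: the paper simply takes $f(t)=(1-e^{-t})x$ for a fixed nonzero $x\in B_1$, whose norm function is strictly increasing, which makes your interpolation of rescaled approximants $b_n$ (note your $b_1=0$, harmlessly) unnecessary, though it is also valid.
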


\begin{proof}
Let $0\neq x\in B_1\subset B$ and define $f\in T(B)$ by $f(t)=(1-e^{-t})x$. Recall that any finite-dimensional representation $\pi$ of $T(B)$ is a finite direct sum of irreducible representations. Then, since $B$ has no finite-dimensional representations, by Lemma \ref{pointeval} there exists a finite set $F\subset(0,\infty)$ such that $\|\pi(f)\|=\max_{t\in F}\|f(t)\|$. In particular, since $\|f(t)\|$ is a strictly increasing function,  $\|\pi(f)\|<\|f(\infty)\|=\|f\|$. 
\end{proof}

Now we are ready to give the main theorem of this section. 

\begin{theorem}\label{FDI} The following are equivalent for any $C^\ast$-algebra $A$:
\begin{enumerate}
    \item[(i)] $A$ is FDI.
    \item[(ii)] For each $a\in A$ there exists a representation $(\pi,\mathcal{H})$ of $A$ with dim$(\mathcal{H})<\infty$ such that $\|a\|=\|\pi(a)\|$.
\item[(iii)] $A$ does not have  an infinite-dimensional simple AF-algebra as a subquotient.
\end{enumerate}
\end{theorem}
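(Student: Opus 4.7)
The plan is to prove the cycle (i) $\Rightarrow$ (ii) $\Rightarrow$ (iii) $\Rightarrow$ (i).

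The implication (i) $\Rightarrow$ (ii) is a standard state-space argument. For $a \in A$, the set of states $\phi$ on $A$ with $\phi(a^*a)=\|a^*a\|=\|a\|^2$ is weak-$\ast$ compact, convex, and nonempty; a short check shows that any extreme point is a pure state of $A$. The GNS representation of such a pure state is irreducible and attains $\|a\|$, and under (i) it is automatically finite-dimensional.

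For (ii) $\Rightarrow$ (iii) I would argue by contrapositive, and this is where the real work lies. Suppose $A$ has a simple, infinite-dimensional AF-algebra $B$ as a subquotient, realized by a closed subalgebra $A_0 \leq A$ and a surjection $q : A_0 \to B$. Lemma \ref{example} supplies an element $f \in T(B)$ with $\|f\|=\|f(\infty)\|$ and $\|\rho(f)\|<\|f\|$ for every finite-dimensional representation $\rho$ of $T(B)$. By the Loring--Pedersen projectivity of the AF-telescope $T(B)$, the evaluation $ev_\infty : T(B)\to B$ lifts through $q$ to a $\ast$-homomorphism $\psi : T(B)\to A_0$ with $q\circ\psi=ev_\infty$. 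Then
\begin{align*}
\|f\|=\|f(\infty)\|=\|q(\psi(f))\|\leq\|\psi(f)\|\leq\|f\|,
\end{align*}
so $\|\psi(f)\|=\|f\|$. On the other hand, for any finite-dimensional representation $\pi:A\to\mathbb{M}_n$, the composite $\sigma:=\pi|_{A_0}\circ\psi$ is a $\ast$-homomorphism of $T(B)$ into $\mathbb{M}_n$, so (invoking Lemma \ref{example} when $\sigma\neq 0$, and otherwise trivially) $\|\pi(\psi(f))\|=\|\sigma(f)\|<\|f\|=\|\psi(f)\|$. Thus $\psi(f)\in A$ witnesses the failure of (ii).

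For (iii) $\Rightarrow$ (i), I would argue by contrapositive: suppose $A$ admits an infinite-dimensional irreducible representation $\pi:A\to B(\mathcal{H})$. The classical dichotomy gives either $K(\mathcal{H})\subseteq\pi(A)$ or $\pi(A)\cap K(\mathcal{H})=0$. In the first case, $\pi^{-1}(K(\mathcal{H}))\leq A$ surjects onto $K(\mathcal{H})$, realizing the simple, infinite-dimensional, AF algebra $K(\mathcal{H})$ as a subquotient of $A$. In the second case, $\pi(A)$ is irreducible on an infinite-dimensional Hilbert space and contains no nonzero compacts, so it cannot be GCR; Theorem \ref{non-GCR} then exhibits the CAR algebra as a subquotient of $\pi(A)$, and pulling back the relevant subalgebra along $\pi$ realizes it as a subquotient of $A$. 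Either way (iii) fails.

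The main obstacle is (ii) $\Rightarrow$ (iii): one needs to produce an element of $A$ whose norm is provably not attained in any finite-dimensional representation, and the projectivity of AF-telescopes is precisely the mechanism that transports the ``failing'' element of Lemma \ref{example} from $T(B)$ into $A$ while preserving both its full norm (via the lifting identity $q\circ\psi=ev_\infty$) and the strict inequality for finite-dimensional representations (via composition with $\pi|_{A_0}$).
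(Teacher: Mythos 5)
Your argument is essentially the paper's: (i)$\Rightarrow$(ii) via a norm-attaining pure state and GNS, (ii)$\Rightarrow$(iii) by lifting the element of Lemma \ref{example} through the Loring--Pedersen projectivity of $T(B)$ (with exactly the same norm computation $\|f\|=\|q(\psi(f))\|\leq\|\psi(f)\|\leq\|f\|$ and the same composition trick for finite-dimensional representations), and (iii)$\Rightarrow$(i) via Glimm--Sakai. The only cosmetic difference in the last implication is that you apply the compacts dichotomy to the given representation and invoke Theorem \ref{non-GCR} for $\pi(A)$, whereas the paper first concludes that $A$ itself is GCR and then uses $K(\mathcal{H})\subseteq\pi(A)$; these are the same ingredients in a slightly different order.

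One small step needs repair. In the first case of your (iii)$\Rightarrow$(i) you call $K(\mathcal{H})$ an AF algebra, but the theorem covers arbitrary (possibly non-separable) $C^*$-algebras, so the irreducible representation may act on a non-separable $\mathcal{H}$; then $K(\mathcal{H})$ is simple and infinite-dimensional but not AF in the sense used here (an inductive limit of a \emph{sequence} of finite-dimensional algebras), and statement (iii) — as well as the telescope machinery it feeds in (ii)$\Rightarrow$(iii) — concerns genuinely AF subquotients. The fix is immediate and is exactly what the paper does: pick a separable infinite-dimensional subspace $\mathcal{H}'\subseteq\mathcal{H}$; since $K(\mathcal{H}')\oplus 0|_{\mathcal{H}'^{\perp}}\subseteq K(\mathcal{H})\subseteq\pi(A)$, either take its preimage under $\pi$, or (as in the paper) choose $x\in A$ with $\pi(C^*(x))=K(\mathcal{H}')\oplus 0|_{\mathcal{H}'^{\perp}}$, using that this algebra is singly generated, and compress by $P_{\mathcal{H}'}$; in either way one obtains a subalgebra of $A$ surjecting onto $K(\mathcal{H}')\cong K(\ell^2)$, which is a simple infinite-dimensional AF algebra. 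With that one-line adjustment your proof is complete.
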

\begin{proof}
To see that (i) implies (ii), recall that for any $a\in A$, there exists a pure state $\varphi$ on $A$ such that $|\varphi(a^*a)|=\|a^*a\|$. Applying the GNS construction to $\varphi$ gives an irreducible representation $\pi_\varphi$ and unit vector $\xi_\varphi$ such that $$\|\pi_\varphi(a)\xi_\varphi\|=\|a\|.$$ Since $A$ is FDI, we know $\pi_\varphi$ is finite-dimensional. 

To show that (ii) implies (iii), suppose $A_0\subseteq A$ is a $C^*$-subalgebra, $B$ is a simple, infinite-dimensional AF-algebra with inductive sequence $(B_n)$, and $q:A_0\to B$ a surjective $\ast$-homomorphism. Let $T(B)$ be the mapping telescope for $(B_n)$. Since AF-telescopes are projective (\cite{LP}), there is a $\ast$-homomorphism
$\psi:  T(B) \to A_0$ such that $q\circ \psi = ev_\infty$, i.e. the following diagram commutes.
\[
\begin{tikzcd}
& A_0\arrow[two heads]{d}{q}\\
T(B)\arrow{ur}{\psi}\arrow{r}[swap]{ev_\infty} & B
\end{tikzcd}
\]
Let $f\in T(B)$ be the element guaranteed by Lemma \ref{example}, and let $a:=\psi(f)$. Then $\|a\|=\|f\|$ since
\begin{align*}
\|a\|=\|\psi(f)\|\leq \|f\|=\|f(\infty)\|=\|ev_\infty(f)\|=\|q(a)\|\leq \|a\|.
\end{align*}
If $\|a\|=\|\pi(a)\|$, for some finite-dimensional representation $\pi$ of 
$A$,  then $f$ attains its norm under the finite-dimensional representation $\pi\circ \psi$ of $T(B)$ which, by Lemma \ref{example}, is not true. Thus $\|a\| > \|\pi(a)\|$ for any finite-dimensional representation $\pi$ of $A$.

To show that (iii) implies (i),  we notice first that (iii) implies that $A$ is GCR. Indeed otherwise  $A$ would have a subquotient isomorphic to the CAR algebra $\mathbb{M}_{2^\infty}$ by Theorem \ref{non-GCR}. Assume now that $A$ does have an infinite-dimensional irreducible representation  $(\pi,\mathcal{H})$. Since $A$ is GCR,  $K(\mathcal{H})\subseteq \pi(A)$. Let $\mathcal{H}'\subseteq \mathcal{H}$ be an infinite-dimensional separable subspace, and let $P_{\mathcal{H}'}$ denote the projection of $\mathcal{H}$ onto $\mathcal{H}'$. Since 
$K(\mathcal{H}')\oplus 0|_{\mathcal{H}'^\perp}$
is singly generated, we can choose  $x\in A$ such that $\pi(C^*(x))=K(\mathcal{H}')\oplus 0|_{\mathcal{H}'^\perp}$. Then $C^*(x)$ is a subalgebra of $A$, and $P_{\mathcal{H}'}\pi P_{\mathcal{H}'}: C^*(x)\to K(\mathcal{H}') $ is  a  surjective $\ast$-homomorphism .
\end{proof}

\begin{remark}
Rephrasing the theorem, we can say that a $C^\ast$-algebra $A$ contains an element $a$ with $\|a\|>\|\pi(a)\|$ for any finite-dimensional representation $(\pi,\mathcal{H})$ of $A$ if and only if $A$ has an infinite-dimensional irreducible representation. 
  Since $C^*(\mathbb{F}_n)$ is primitive (\cite{C}), we conclude that there are elements which do not attain their norm under a finite-dimensional representation.  Recall that in \cite{FNT}, the authors show that no such element lies in $\mathbb{CF}_n$.
\end{remark}

\begin{remark}\label{subh}
It follows from Theorem \ref{RFD} and standard arguments that the following are equivalent for a $C^\ast$-algebra $A$ and any $n<\infty$.
\begin{enumerate}
\item[(i)] $A$ is $n$-subhomogeneous (i.e. every irreducible representation is of dimension no more than $n$). 
\item[(ii)] $A$ has a separating family of finite-dimensional representations of dimension no more than $n$. 
\item[(iii)] For each $a\in A$ there exists a representation $(\pi,\mathcal{H})$ of $A$ of dimension no more than $n$ such that $\|a\|=\|\pi(a)\|$.
\item[(iv)] The set $\displaystyle \{a\in A: \|a\|=\max_{\substack{\pi\in \text{Irr}_k(A)\\ k\leq n}}\|\pi(a)\|\}$
is dense in $A$. 
\end{enumerate}
\end{remark}

Before we conclude this section, we record a consequence of this remark, which will prove useful in the next section. 
\begin{proposition}\label{subhom}
Suppose $A$ is RFD and $A_0\subseteq A$ is a non-subhomogeneous subalgebra. Then there exists an unbounded sequence $(n_k)_{k\in\mathbb{N}}$ in $\mathbb{N}$ and irreducible representations $\pi_k:A_0\to \mathbb{M}_{n_k}$ such that each $\pi_k$ is a subrepresentation of $\pi'_k|_{A_0}$, denoted $\pi_k\leq \pi'_k|_{A_0}$, for some finite-dimensional representation $\pi_k$ of $A$.
\end{proposition}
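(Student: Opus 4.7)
The plan is to extract the desired irreducible subrepresentations directly from a separating family of finite-dimensional representations of $A$, and then rule out boundedness of their dimensions via Remark \ref{subh}.

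First I would fix a separating family $\mathcal{G}$ of finite-dimensional representations of $A$; this exists by the RFD hypothesis. Each $\rho \in \mathcal{G}$ has finite-dimensional image, so its restriction $\rho|_{A_0}$ is a finite-dimensional (hence completely reducible) representation of $A_0$, which decomposes as a finite direct sum of irreducible representations of $A_0$. Let $\mathcal{H}$ denote the collection of all irreducible representations of $A_0$ that arise in this way, i.e.\ those $\pi$ with $\pi \leq \rho|_{A_0}$ for some $\rho \in \mathcal{G}$. Each $\pi \in \mathcal{H}$ is then a finite-dimensional irreducible representation of $A_0$ which, by construction, appears as a subrepresentation of the restriction of a finite-dimensional representation of $A$.

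Next I would observe that $\mathcal{H}$ is separating for $A_0$. Indeed, if $a \in A_0$ satisfies $\pi(a)=0$ for every $\pi\in\mathcal{H}$, then $\rho(a)=\rho|_{A_0}(a) = 0$ for every $\rho\in\mathcal{G}$, and since $\mathcal{G}$ separates points of $A \supseteq A_0$, we conclude $a=0$.

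Finally, suppose for contradiction that the dimensions $\{\dim\pi : \pi\in\mathcal{H}\}$ are bounded by some $n < \infty$. Then $\mathcal{H}$ is a separating family of finite-dimensional representations of $A_0$, each of dimension at most $n$. By Remark \ref{subh} (the implication (ii)$\Rightarrow$(i)), this forces $A_0$ to be $n$-subhomogeneous, contradicting the hypothesis that $A_0$ is non-subhomogeneous. Therefore the dimensions in $\mathcal{H}$ are unbounded, and we can extract a sequence $\pi_k \in \mathcal{H}$ with $n_k := \dim\pi_k \to \infty$, together with associated $\pi_k' \in \mathcal{G}$ satisfying $\pi_k \leq \pi_k'|_{A_0}$, as required. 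The argument is essentially a repackaging of Remark \ref{subh}, so I expect no serious obstacle; the only step requiring care is verifying that $\mathcal{H}$ separates $A_0$, which as above reduces to the faithfulness of $\bigoplus_{\rho\in\mathcal{G}}\rho$ on $A$.
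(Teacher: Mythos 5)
Your proposal is correct and follows essentially the same route as the paper: restrict a separating family of finite-dimensional representations of $A$ to $A_0$, collect the irreducible subrepresentations (which still separate points of $A_0$), and invoke Remark \ref{subh} to rule out a uniform bound on their dimensions. The only cosmetic difference is that the paper takes the separating family of $A$ to consist of irreducible representations, which is immaterial to the argument.
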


\begin{proof}
Let $\mathcal{F}$ be a separating family of finite-dimensional irreducible representations of $A$. Then the collection $\{\pi|_{A_0}: \pi\in \mathcal{F}\}$ is a separating family of representations of $A_0$. Let
$$\mathcal{F}_0=\{\sigma\in \text{Irr}(A_0): \sigma \leq \pi|_{A_0}\ \text{for some}\ \pi\in \mathcal{F}\}.$$
Then $\mathcal{F}_0$ separates the points of $A_0$. If the set $\{\text{dim}(\sigma)|\ \sigma\in \mathcal{F}_0\}$ is bounded, then $A_0$ is subhomogeneous by Remark \ref{subh}. 
\end{proof}

\section{Growth of Finite-Dimensional Norms}\label{section5}
Let $n\in\mathbb{N}$. If a $C^*$-algebra has a representation of dimension no more than $n$, we define a seminorm $\|\cdot\|_{\mathbb{M}_n}$ on $A$ by 
\begin{align*}
    \|a\|_{\mathbb{M}_n}=\sup\{\|\pi(a)\|\;|\; \pi:A\to \mathbb{M}_n\},
\end{align*}
for all $a\in A$. We do not require  representations to be non-degenerate and so by $\pi:A \to \mathbb{M}_n$ we mean a representation of dimension not larger than $n$. Equivalently we can say that 
\begin{align*}
    \|a\|_{\mathbb{M}_n}=\sup\{\|\pi(a)\|\}
\end{align*}
 where supremum is taken over all irreducible representations of dimension not larger than $n$.

Suppose  that $\{n_1, n_2, \ldots\}$ is the nonempty set of dimensions of all \textit{irreducible} finite-dimensional representations of a $C^*$-algebra $A$, arranged in increasing order, with $\kappa=|\{n_1, n_2,...\}|$.
Then for each $a\in A$ we get a sequence $$(\|a\|_{\mathbb{M}_{n_k}})_{k\leq \kappa}.$$ In general we would like to know what sequences of numbers can be obtained in this way.
Namely, define the set $\Lambda(A)$ by
    $$\Lambda(A)=\{(\|a\|_{\mathbb{M}_{n_k}})_{k\leq \kappa}\;|\: a\in A\}$$
where in the case $\kappa=\aleph_0$ by $(\lambda_k)_{k\leq \kappa}$ we mean $(\lambda_k)_{k\in \mathbb N}$.
Since we allow degenerate representations, all such sequences will be nondecreasing.
In this section we prove that, for any $C^*$-algebra $A$ with at least one finite-dimensional representation, $\Lambda(A)$ contains the set of all nondecreasing sequences of $\kappa$ positive numbers which are eventually constant (Theorem \ref{1}). 
 Moreover, we show that $A$ is an FDI-algebra if and only if the two sets coincide. (Corollary \ref{3}).
\medskip

Below we will use the Chinese Remainder Theorem: Let $A$ be a $C^*$-algebra and $I_1, \ldots, I_k$ be closed two-sided ideals in $A$ such that $I_i + I_j = A$, when $i\neq j$. Then the map $$\phi: a+ \bigcap_{i=1}^k I_i \mapsto (a+I_1, \ldots,  a+I_k)$$ gives a $\ast$-isomorphism from $A/(\bigcap_{i=1}^k I_i)$ to $A/I_1 \oplus \ldots \oplus  A/I_k.$

\begin{theorem}\label{1} Let $N\in \mathbb N$, and $0\le \lambda_1 \le \lambda_2 \le ... \le \lambda_N$ be a sequence of nonnegative numbers. Suppose that a $C^*$-algebra $A$ has irreducible representations of dimensions $n_1< n_2< \ldots < n_N$ (and possibly of some other dimensions too). Then there exists $a\in A$ such that $\|a\|_{\mathbb{M}_{n_k}} = \lambda_k$, for $1\le k \le N.$
In addition $a$ can be chosen such that $\|a\| = \|a\|_{\mathbb{M}_{n_N}}$.
\end{theorem}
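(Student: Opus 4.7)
The plan is to construct $a \in A$ by pulling back a carefully designed element of a projective AF-telescope via the Chinese Remainder Theorem. First, for each $k$ fix an irreducible representation $\pi_k: A \to \mathbb{M}_{n_k}$ and set $I_k := \ker \pi_k$. Since $\mathbb{M}_{n_k}$ is simple, each $I_k$ is a maximal two-sided ideal; the $I_k$ are pairwise distinct because their quotients have distinct dimensions, and two distinct maximal ideals are comaximal. The Chinese Remainder Theorem (as quoted before the statement) then yields a surjective $*$-homomorphism $\pi: A \twoheadrightarrow C_N := \bigoplus_{k=1}^N \mathbb{M}_{n_k}$ with $\mathrm{pr}_k \circ \pi = \pi_k$ for every $k$.

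Next I would form the finite AF-telescope $T := T(C_N)$ associated to the tower $C_1 \subset C_2 \subset \ldots \subset C_N$, where $C_k := \bigoplus_{j=1}^k \mathbb{M}_{n_j}$ and each inclusion pads a zero in the last slot. Projectivity of $T$ (Loring--Pedersen) lifts $ev_\infty: T \to C_N$ through $\pi$ to a $*$-homomorphism $\psi: T \to A$ with $\pi \circ \psi = ev_\infty$. Inside $T$, build the test element $f(t) := \sum_{k=1}^N g_k(t)\, e_k$, where $e_k \in \mathbb{M}_{n_k}$ is a rank-one projection and $g_k: (0,\infty] \to [0,\lambda_k]$ is continuous, non-decreasing, vanishes on $(0,k-1]$, and satisfies $g_k(\infty) = \lambda_k$. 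Since each $g_k$ vanishes on $(0,k-1]$, one has $f(t) \in C_{\lceil t \rceil}$ for $t \in (0,N]$, and $f$ vanishes at $0$, so $f \in T$. By Lemma \ref{pointeval} every irreducible representation of $T$ has the form $\mathrm{pr}_i \circ ev_t$ with $1 \le i \le \min(\lceil t \rceil, N)$ and dimension $n_i$, and monotonicity of $(\lambda_k)$ then yields
$$\|f\|_{\mathbb{M}_{n_j}} = \sup_{i \le j}\, \sup_{t > i-1} g_i(t) = \sup_{i \le j} \lambda_i = \lambda_j, \qquad \|f\| = \lambda_N.$$

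Finally, set $a := \psi(f)$. For the upper bound, any representation $\sigma: A \to \mathbb{M}_{n_j}$ composes with $\psi$ to a representation of $T$ of dimension at most $n_j$; its irreducible constituents are of the form $\mathrm{pr}_i \circ ev_t$ with $n_i \le n_j$, hence contribute only to $\|f\|_{\mathbb{M}_{n_j}}$, giving $\|\sigma(a)\| \le \lambda_j$. For the matching lower bound, $\pi_j(a) = \mathrm{pr}_j(ev_\infty(f)) = \lambda_j e_j$ has norm exactly $\lambda_j$. Combining these with $\|a\| \le \|f\| = \lambda_N = \|\pi_N(a)\| \le \|a\|$ delivers $\|a\|_{\mathbb{M}_{n_j}} = \lambda_j$ for all $j$ and $\|a\| = \|a\|_{\mathbb{M}_{n_N}} = \lambda_N$. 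The main bookkeeping obstacle is the correct classification of irreducible representations of $T(C_N)$ in the case where the limit algebra is not simple (one uses only the first half of Lemma \ref{pointeval}) together with verifying that $f$ genuinely lies in $T$; once those are in place, projectivity makes the transfer from $T$ to $A$ essentially automatic.
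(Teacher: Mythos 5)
Your proposal is correct and follows essentially the same route as the paper: the Chinese Remainder Theorem produces a surjection of $A$ onto $\bigoplus_{k\le N}\mathbb{M}_{n_k}$, Loring--Pedersen projectivity of an AF-telescope lifts an evaluation map, and Lemma \ref{pointeval} (point evaluations) controls every representation of dimension at most $n_j$, with a monotone scalar profile encoding the prescribed values $\lambda_k$. The only minor variation is your choice of telescope --- the paper telescopes the nested matrix algebras $\mathbb{M}_{n_1}\subset\cdots\subset\mathbb{M}_{n_N}$ and lifts $\oplus_{i} ev_i$, while you telescope the partial sums $C_k=\bigoplus_{j\le k}\mathbb{M}_{n_j}$ and lift only $ev_\infty$, which makes the lower bounds immediate; just note that your stated conditions on $g_1$ should additionally require $g_1(t)\to 0$ as $t\to 0^+$ so that $f$ genuinely lies in $C_0((0,\infty],C_N)$, a trivially fixable point.
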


\begin{proof} For each $i\leq N$, let $\pi_i:A\to \mathbb{M}_{n_i}$ be an irreducible representation with kernel $I_i$, i.e. $A/I_i\simeq \pi_i(A)=\mathbb{M}_{n_i}$. Since each $\mathbb{M}_{n_i}$ is simple, each $I_i$ is a maximal ideal, and so
 $I_i + I_j = A$, for each $i\neq j$. Since $\ker (\oplus_{i=1}^N \pi_{i}) = 
\bigcap_{i=1}^N I_i$, we have $\left(\oplus_{i=1}^N \pi_{i}\right) (A) \simeq A/(\bigcap_{i=1}^N I_i)$ and hence by the Chinese Reminder Theorem $$\left(\oplus_{i=1}^N \pi_{i}\right) (A) = \pi_{1} (A) \oplus \ldots \oplus \pi_{N} (A).$$ Thus 
$q=\oplus_{i=1}^N \pi_{i}: A \to \mathbb{M}_{n_1} \oplus \ldots \oplus \mathbb{M}_{n_N}$ is a surjective $\ast$-homomorphism. Now, consider standard embeddings $$\mathbb{M}_{n_1}\subset  \mathbb{M}_{n_2} \subset  \ldots \subset \mathbb{M}_{n_N},$$ and let $T(\mathbb{M}_{n_N})$ denote the corresponding AF-telescope. For $i\leq N$, let \\ 
$ev_i:T(\mathbb{M}_{n_N})\to \mathbb{M}_{n_i}$ denote the evaluation map, and
 let $$\phi = \oplus_{i=1}^N ev_{i}: T(\mathbb{M}_{n_N}) \to  \mathbb{M}_{n_1} \oplus \ldots \oplus \mathbb{M}_{n_N}.$$ Since $T(\mathbb{M}_{n_N})$ is projective, $\phi$ lifts to some $*$-homomorphism $\psi: T(\mathbb{M}_{n_N}) \to A$ so that $$q\circ \psi = \phi,$$ giving us the following commutative diagram.
\[
\begin{tikzcd}
& A\arrow[two heads]{d}{q}\\
T(\mathbb{M}_{n_N})\arrow{ur}{\psi}\arrow{r}[swap]{\phi} & \oplus_{i=1}^N \mathbb{M}_{n_i}
\end{tikzcd}
\]
\medskip

Let $f \in T(\mathbb{M}_{n_N})$ be any element such that $\|f(t)\|$ is a nondecreasing function on $(0,\infty]$ with $\|f(i)\| = \lambda_i$ for each $i\leq N$, and let $a= \psi(f)$. Then $$q(a)=\oplus_{i=1}^N \pi_i(a) = \oplus_{i=1}^N \pi_i(\psi(f)) = \phi(f) = \oplus_{i=1}^N f(i).$$ Hence for any $k\leq N$, $$\|\pi_k(a)\| = \|f(k)\| = \lambda_k,$$
which implies $$\|a\|_{\mathbb{M}_{n_k}} \ge \lambda_k.$$ 

On the other hand, for any representation $\pi$ of $A$ of dimension not larger than $n_k$, $\pi\circ \psi$ is a representation  of $T(\mathbb{M}_{n_N})$ of dimension not larger than $n_k$ and hence factors through a finite direct sum of evaluations at some points in $(0,k]$ by Lemma \ref{pointeval}. 
Since $\|f(t)\|$ is a nondecreasing function and $\|f(k)\| = \lambda_k$, it follows that $$\|\pi(a)\| = \|\pi(\psi(f))\| \le \lambda_k.$$ 
Thus for each $k\leq N$, $$ \|a\|_{\mathbb{M}_{n_k}} = \lambda_k.$$ If we additionally chose $f$ to attain its norm at the point $n_N$, then we would have $$\lambda_N  = \|f\| \ge \|\psi(f)\| = \|a\| \ge \|q(a)\| = \|\oplus_{k=1}^N f(k)\| = \lambda_N$$ whence $\|a\| = \lambda_N.$
\end{proof}

\begin{corollary}\label{Thom+}
Let $G$ be a discrete group with representations of dimensions\\ $n_1<n_2<\infty$, and let $\epsilon>0$. Then there is an $a\in \mathbb{C}G$ such that $\|a\|_{\mathbb{M}_{n_1}} \le \epsilon$ and $\|a\|_{\mathbb{M}_{n_2}} > \|a\| - \epsilon.$
\end{corollary}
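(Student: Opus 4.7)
The plan is to combine Theorem \ref{1}, applied to the full group $C^*$-algebra $A = C^*(G)$, with the density of $\mathbb{C}G$ in $C^*(G)$. The theorem gives us the freedom to prescribe the values of $\|\cdot\|_{\mathbb{M}_{n_1}}$ and $\|\cdot\|_{\mathbb{M}_{n_2}}$ (and the $C^*$-norm) on a single element of $C^*(G)$; we then approximate in the $C^*$-norm by something in $\mathbb{C}G$, using that $\|\cdot\|_{\mathbb{M}_{n_k}}\le\|\cdot\|$ to transfer the inequalities.

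More concretely, first invoke Theorem \ref{1} with $N=2$, $\lambda_1 = \epsilon/3$, and $\lambda_2 = 1$ (any value exceeding $\epsilon/3$ works) to produce $b\in C^*(G)$ with
$$\|b\|_{\mathbb{M}_{n_1}} = \epsilon/3, \qquad \|b\|_{\mathbb{M}_{n_2}} = \|b\| = 1.$$
Next, by density of $\mathbb{C}G$ in $C^*(G)$, pick $a\in\mathbb{C}G$ with $\|a-b\|<\epsilon/3$.

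Finally, verify the two desired bounds by the reverse triangle inequality, using that $\|\cdot\|_{\mathbb{M}_{n_k}}$ is a seminorm dominated by the $C^*$-norm. On the one hand,
$$\|a\|_{\mathbb{M}_{n_1}} \le \|b\|_{\mathbb{M}_{n_1}} + \|a-b\|_{\mathbb{M}_{n_1}} \le \epsilon/3 + \|a-b\| < \epsilon.$$
On the other hand,
$$\|a\|_{\mathbb{M}_{n_2}} \ge \|b\|_{\mathbb{M}_{n_2}} - \|a-b\| = \|b\| - \|a-b\| \ge \|a\| - 2\|a-b\| > \|a\| - \tfrac{2\epsilon}{3} > \|a\|-\epsilon.$$

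There is no real obstacle here: Theorem \ref{1} supplies precisely the element needed in $C^*(G)$, and the rest is a standard three-epsilon argument. The only small subtlety worth noting is that the hypothesis on $G$ should be read as providing \emph{irreducible} finite-dimensional representations of dimensions $n_1$ and $n_2$, since this is what Theorem \ref{1} requires; this is the intended meaning in context.
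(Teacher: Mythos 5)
Your proof is correct and is exactly the paper's argument: the paper's proof is the one-line observation that the statement follows from Theorem \ref{1} applied to $C^*(G)$ together with the density of $\mathbb{C}G$, and you have simply written out the routine approximation estimates (including correctly reading the hypothesis as irreducible representations of dimensions $n_1<n_2$). No issues beyond the trivial remark that $\lambda_2$ must be chosen at least $\epsilon/3$, which your parenthetical already covers.
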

\begin{proof}
Since $\mathbb{C} G$ is dense in $C^*(G)$, the statement follows from Theorem \ref{1}.
\end{proof}

\begin{remark}
 In case $G=\mathbb{F}_n$ with $n<\infty$, Thom proves in \cite{Thom} that
for any $d>0$ and $\epsilon>0$, there exists a nontrivial $w\in \mathbb{F}_n$ such that 
$$\sup\{\|\pi(1-w)\|: \pi:C^*(\mathbb{F}_n)\to \mathbb{M}_d\}<\epsilon.$$
 However, he does not state (in \cite{Thom}) for which $m>d$ we know that we have $\|1-w\|_{\mathbb{M}_m} > \|1-w\| - \epsilon$. Corollary \ref{Thom+} guarantees that for any $m>d$, we can find some element with this behavior, but, in contrast to Thom, we do not know what this element looks like.
 
 On the other hand, it follows from \cite[Lemma 2.7]{FNT} that it suffices to take $m\geq 4n^\ell$, where $\ell$ is the length of $w$. We show in Section \ref{section6} that it actually suffices to take $m\geq 2\ell$.
\end{remark}

\begin{corollary}\label{3} 
Assume $A$ has irreducible representations of $\kappa>0$ many distinct finite dimensions.  
Then
\begin{align*}
\Lambda(A) \supseteq \{(\lambda_n)_{n\leq \kappa} \;|\; 0\le \lambda_n \le \lambda_{n+1}\ \forall\ n\leq \kappa\ \text{and}\ (\lambda_n)\ \text{is eventually constant}\}
\end{align*}
 Moreover, the two sets are equal if and only if $A$ is FDI.

\end{corollary}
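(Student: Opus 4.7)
The inclusion $\supseteq$ is an immediate consequence of Theorem \ref{1}. Given any eventually constant non-decreasing non-negative sequence $(\lambda_n)_{n \le \kappa}$ stabilizing at $\lambda_N$ from index $N$ onward, Theorem \ref{1} produces $a \in A$ realizing $\|a\|_{\mathbb{M}_{n_k}} = \lambda_k$ for $k \le N$ together with $\|a\| = \lambda_N$; the sandwich $\lambda_N = \|a\|_{\mathbb{M}_{n_N}} \le \|a\|_{\mathbb{M}_{n_k}} \le \|a\| = \lambda_N$ then fills in the remaining entries.

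For the forward direction of the iff (FDI $\Rightarrow$ equality), any $a \in A$ attains its norm under some finite-dimensional representation $\pi$ by Theorem \ref{FDI}. Decomposing $\pi$ into irreducible summands, one summand of some finite dimension $n_K$ attains $\|a\|$, so $\|a\|_{\mathbb{M}_{n_k}} = \|a\|$ for every $k \ge K$ by monotonicity, and the sequence is eventually constant.

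For the contrapositive of the reverse direction, assume $A$ is not FDI. I would adapt the telescope construction from the proof of (ii)$\Rightarrow$(iii) of Theorem \ref{FDI}. Invoke Theorem \ref{FDI}(iii) to extract a subalgebra $A_0 \subseteq A$ together with a surjection $q : A_0 \to B$ onto some simple, infinite-dimensional AF-algebra $B = \overline{\bigcup_n B_n}$ with simple matrix blocks $B_n = \mathbb{M}_{d_n}$. Projectivity of $T(B)$ lifts $ev_\infty$ to $\psi : T(B) \to A_0$, and as in Lemma \ref{example} take $f(t) = (1 - e^{-t}) x$ for some $0 \ne x \in B_1$, setting $a := \psi(f)$. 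The same squeeze as in Theorem \ref{FDI} yields $\|a\| = \|x\|$. For any finite-dimensional representation $\pi$ of $A$ of dimension at most $n_K$, $\pi \circ \psi$ is a finite-dimensional representation of $T(B)$ of dimension at most $n_K$; by Lemma \ref{pointeval} each of its irreducible summands factors through an evaluation $ev_{t_i}$ with $t_i \le m(K) := \min\{m : d_m \ge n_K\}$. Hence $\|\pi(a)\| \le \|f(m(K))\| = (1 - e^{-m(K)})\|x\| < \|x\| = \|a\|$, so $\|a\|_{\mathbb{M}_{n_K}} < \|a\|$ for every $K$.

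The main obstacle is promoting the uniform strict inequality $\|a\|_{\mathbb{M}_{n_k}} < \|a\|$ to the failure of eventual constancy of this non-decreasing sequence. The natural route is Lemma \ref{sup}: when the finite-dimensional irreducible representations separate the points of $A$ (e.g.\ when $A$ is RFD), $\|a\| = \sup_k \|a\|_{\mathbb{M}_{n_k}}$, so the supremum is never attained at any finite $k$, and the non-decreasing sequence cannot be eventually constant. In greater generality one would supplement this with an application of Proposition \ref{subhom} to $A_0$ to produce an explicit sequence of finite-dimensional representations of $A$ whose values on $a$ approach $\|a\|$ from below, closing the argument.
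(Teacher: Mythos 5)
Your inclusion argument and your proof that FDI implies equality are correct and are exactly what the paper's one-line proof intends: Theorem \ref{1} plus the monotonicity sandwich for $\supseteq$, and Theorem \ref{FDI} plus decomposition of a norming finite-dimensional representation into irreducible summands for the forward direction. In the converse direction you do not actually need to rerun the telescope construction or the dimension bookkeeping with $m(K)$: the negation of condition (ii) of Theorem \ref{FDI} already hands you an element $a$ with $\|a\|_{\mathbb{M}_{n_k}}<\|a\|$ for all $k$. (Your bookkeeping also quietly uses the ``moreover'' clause of Lemma \ref{pointeval}, which requires the blocks $B_n$ to be simple; that is legitimate for $\mathbb{M}_{2^\infty}$ or $\mathbb{K}(\ell^2)$, to which the paper reduces in later proofs, but not for an arbitrary presentation of a simple AF algebra.)

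The genuine gap is the one you flag yourself, and it is real: knowing $\|a\|_{\mathbb{M}_{n_k}}<\|a\|$ for every $k$ does not prevent the nondecreasing sequence from stabilizing at a value strictly below $\|a\|$, so it does not by itself exhibit a non--eventually-constant sequence in $\Lambda(A)$. Your repair via Lemma \ref{sup} is correct precisely when the finite-dimensional irreducible representations norm $a$, e.g.\ when $A$ is RFD; but the ``in greater generality'' step cannot be closed by Proposition \ref{subhom}, because that proposition itself assumes $A$ is RFD, and in fact no argument can close it at this level of generality. For instance, let $A=\bigl(\bigoplus_{k\ge 1}\mathbb{M}_k\bigr)\oplus\mathbb{M}_{2^\infty}$, where the first summand is the $c_0$-direct sum: $A$ has irreducible representations of every finite dimension and is not FDI, yet for $a=(b,c)$ one has $\|a\|_{\mathbb{M}_n}=\max_{k\le n}\|b_k\|$, which stabilizes because $\|b_k\|\to 0$, so every sequence in $\Lambda(A)$ is eventually constant and the two sets coincide. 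Thus the obstacle you identified is also present, silently, in the paper's own citation-style proof: the intended deduction ``norm never attained in finite dimensions $\Rightarrow$ sequence not eventually constant'' needs the finite-dimensional representations to compute $\|a\|$, i.e.\ an RFD-type hypothesis. Under that hypothesis your Lemma \ref{sup} argument does complete the proof, and that is exactly the setting treated rigorously in the paper's Theorem \ref{RFDnotFDI}.
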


\begin{proof} It follows from Theorem \ref{1} and Theorem \ref{FDI}.
\end{proof}

\medskip

Using the techniques from this section, we can provide further characterizations for FDI $C^*$-algebras. 

 \begin{theorem}\label{subspace}
Suppose $A$ is RFD. Then the following are equivalent:
\begin{enumerate}
    \item A is FDI.
    \item The set 
$$\{a\in A: \|a\|=\max_{\substack{\pi\in \text{Irr}_n(A)\\ n<\infty}}\|\pi(a)\|\}$$
is closed under addition.
\item The set 
$$\{a\in A: \|a\|=\max_{\substack{\pi\in \text{Irr}_n(A)\\ n<\infty}}\|\pi(a)\|\}$$
is closed under multiplication.
\end{enumerate}
\end{theorem}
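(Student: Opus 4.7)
The implication $(1)\Rightarrow(2),(3)$ is immediate: by Theorem \ref{FDI}, when $A$ is FDI we have $\mathcal{N} = A$, trivially closed under both operations. For the converses I will prove both contrapositives uniformly: assuming $A$ is RFD but not FDI, I exhibit $b_1,b_2 \in \mathcal{N}$ with $b_1+b_2 \notin \mathcal{N}$ and $c_1,c_2 \in \mathcal{N}$ with $c_1 c_2 \notin \mathcal{N}$.

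By Theorem \ref{FDI}, $A$ has a $C^*$-subalgebra $A_0$ and a surjection $q\colon A_0 \twoheadrightarrow B$ onto a simple infinite-dimensional AF algebra; reindexing so that the inductive sequence $(B_n)$ consists of simple matrix algebras with $B_1 \cong \mathbb{M}_k$ for some $k\ge 2$, I fix two nonzero orthogonal projections $p, r \in B_1$. Projectivity of $T(B)$ provides a lift $\psi\colon T(B)\to A_0$ of $ev_\infty$, so $\|g(\infty)\| \le \|\psi(g)\| \le \|g\|$ for every $g \in T(B)$. For any finite-dimensional representation $\sigma$ of $A$, Lemma \ref{pointeval} decomposes $\sigma\circ\psi$ as a finite direct sum of summands $\eta_i\circ ev_{t_i}$ with $\eta_i$ an irreducible, hence isometric, representation of $B_{\lceil t_i \rceil}$. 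Let $T^* \subseteq (0,\infty)$ collect those $t$ that appear in a nonzero such summand for some finite-dimensional irreducible $\sigma$.

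The crux is the following observation: whenever $g\in T(B)$ satisfies $\|g\| = \|g(t_0)\|$ for some $t_0 \in T^*$, one has $\psi(g) \in \mathcal{N}$, because the witnessing $\sigma_0$ yields $\|\sigma_0(\psi(g))\| \ge \|g(t_0)\| = \|g\| \ge \|\psi(g)\|$, forcing equality throughout. To refute additive closure, fix any $t_1 \in T^*$ and a nonnegative continuous bump $h$ compactly supported near $t_1$ with $h(t_1) = M > 1$, and define
\begin{align*}
f(s)    &= (1 - e^{-s})\,p, \\
b_1(s)  &= f(s) + h(s)\,r, \\
b_2(s)  &= -h(s)\,r.
\end{align*}
Orthogonality of $p$ and $r$ gives $\|b_i(s)\| = \max\bigl(1 - e^{-s},\, h(s)\bigr)$, which is maximized at $s = t_1$ with value $M$, so by the observation $\psi(b_i) \in \mathcal{N}$. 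Meanwhile $b_1+b_2 = f$, and as in the proof of Theorem \ref{FDI} one has $\|\sigma(\psi(f))\| \le \max_i (1-e^{-t_i}) < 1 = \|\psi(f)\|$ for every finite-dimensional $\sigma$, so $\psi(f) \notin \mathcal{N}$.

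For multiplicative closure, pick two distinct $t_1, t_2 \in T^*$ together with bumps $h_i$ of disjoint support satisfying $h_i(t_i) = M > 1$, and set
\[
c_i(s) = \sqrt{1 - e^{-s}}\,p + h_i(s)\,r, \quad i = 1,2.
\]
A short computation using $pr = rp = 0$ and $h_1 h_2 \equiv 0$ shows $c_1 c_2 = (1-e^{-s})p = f$, while each $c_i$ peaks at $t_i \in T^*$ and hence $\psi(c_i) \in \mathcal{N}$; yet $\psi(c_1)\psi(c_2) = \psi(f) \notin \mathcal{N}$. The main obstacle is to establish that $T^*$ is unbounded (which supplies the two distinct points needed above). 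Supposing $T^* \subseteq (0, N]$, every $\sigma\circ\psi$ would vanish on the ideal $J = \{g \in T(B) : g|_{(0,N]} \equiv 0\}$; RFD of $A$ would then force $\psi(J) = 0$, so $\psi$ would factor through $T(B)/J$. But $J$ contains functions that do not vanish at $\infty$, so $ev_\infty$ does not descend through $T(B)/J$, contradicting $q\circ\psi = ev_\infty$. Hence $T^*$ is unbounded, completing the argument.
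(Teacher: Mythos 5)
Your proof is correct, but it reaches the two nontrivial implications by a genuinely different mechanism than the paper. Both arguments share the same skeleton: Theorem \ref{FDI} produces $A_0\subseteq A$ and a surjection $q\colon A_0\to B$ onto a simple infinite-dimensional AF algebra (as in the paper, you should say explicitly that $B$ may be taken to be $\mathbb{M}_{2^\infty}$ or $\mathbb{K}(\ell^2)$ — your ``reindexing'' to simple matrix building blocks is not available for an arbitrary simple AF algebra, and simplicity of the fibers $B_{\lceil t\rceil}$ is exactly what makes your $\eta_i$ isometric), then projectivity of $T(B)$ and Lemma \ref{pointeval}. The paper guarantees that the lifted elements attain their norms in finite-dimensional representations of $A$ by pre-arranging it: Proposition \ref{subhom} supplies finite-dimensional irreducibles of $A_0$ that extend to representations of $A$, their images are spliced into the inductive sequence, and the Chinese Remainder Theorem allows one to lift $ev_\infty\oplus ev_n$ (resp.\ $ev_\infty\oplus ev_{n_1}\oplus ev_{n_2}$) rather than $ev_\infty$ alone; the counterexamples are then tent-type functions of a single element peaking at the designated points. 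You lift only $ev_\infty$ and argue post hoc: the set $T^*$ of evaluation points visible to finite-dimensional irreducibles of $A$ through $\psi$ must be unbounded, since otherwise $\psi$ would annihilate a tail ideal $J$ of $T(B)$ by RFD-ness of $A$, contradicting $q\circ\psi=ev_\infty$ because $ev_\infty(J)\neq 0$; bump functions over two orthogonal projections supported near visible points then produce the additive counterexample, and the square-root trick with disjointly supported bumps handles the multiplicative one (note you only ever need two points of $T^*$, so unboundedness is more than enough). Your route dispenses with Proposition \ref{subhom}, the splicing step, and the Chinese Remainder Theorem, at the modest cost of the $T^*$ argument; the paper's route buys finer control — it pins down the specific finite dimensions at which the norm is attained — which is the kind of precision its Theorems \ref{1} and \ref{RFDnotFDI} rely on.
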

 
 \begin{proof}
 Clearly $(1)\Rightarrow (2)$ and $(1)\Rightarrow (3)$.
 
 To show that $(2)\Rightarrow (1)$, we assume $A$ is not FDI. We demonstrate the existence of $a_1,a_2\in A$ such that $a_1$ and $a_2$ achieve their norm under a finite-dimensional representation, but $a_1+a_2$ does not. 
 By Theorem \ref{FDI}, there exists a subalgebra $A_0\subseteq A$ and a simple infinite-dimensional AF-algebra $B$ with inductive sequence $(B'_n)$ such that $A_0$ surjects onto $B$. 
 Moreover, we can take $B$ to be either $\mathbb{M}_{2^\infty}$ with inductive sequence $(\mathbb{M}_{2^n})$ or $\mathbb{K}(\ell^2)$ with inductive sequence $(\mathbb{M}_n)$. 
 By Proposition \ref{subhom}, there is a finite-dimensional nonzero irreducible representation $\pi_0$ of $A_0$ and a finite-dimensional representation $\pi_0'$ of $A$ such that $\pi_0$ is a subrepresentation of $\pi_0'|_{A_0}$. Then for some $n$, $$B'_{n-1}\hookrightarrow \pi_0(A_0)\hookrightarrow B'_n.$$ 
 So, we can find a new inductive sequence $(B_k)$ where $B_k=B'_k$ for $k< n$, $B_n\simeq \pi_0(A_0)$, and $B_k=B'_{k-1}$ for $k> n$. 
 We still call the inductive limit $B$, and let $\pi:A_0\to B$ be a surjection. As in the proof of Theorem \ref{1}, since $B_n$ and $B$ are simple, it follows from the Chinese remainder theorem that $$q:=\pi\oplus \pi_0:A_0 \to B\oplus B_n$$ is a surjection. Let $$\phi:=ev_\infty\oplus ev_n: T(B)\to B\oplus B_n.$$ The projectivity of $T(B)$ again yields the following commutative diagram.
 \[
\begin{tikzcd}
& A_0\arrow[two heads]{d}{q}\\
T(B)\arrow{ur}{\psi}\arrow{r}[swap]{\phi} & B\oplus B_n
\end{tikzcd}
\]
Let $b\in B_1$ with $\|b\|=1$. Define $f_1\in T(B)$ by 
 \[
   f_1(t) = \left\{
     \begin{array}{lcl}
       \frac{2}{n}tb &;&  t\in (0,n]\\
       &&\\
       \frac{2}{n}(2n-t)b &;&  t\in(n,2n]\\
       &&\\
       (1-e^{2n-t})b &;& t\in(2n,\infty].
     \end{array}
   \right.
\]
Define $f_2\in T(B)$ by 
 \[
   f_2(t) = \left\{
     \begin{array}{lcl}
       -f_1(t) &;&  t\in (0,2n]\\
       &&\\
       0 &;&  t\in(2n,\infty].
     \end{array}
   \right.
\]
Then for $i=1,2$, $$\|f_i\|=\|f_i(n)\|,$$ but for any finite-dimensional representation $\rho$ of $T(B)$, $$\|f_1+f_2\|=\|(f_1+f_2)(\infty)\|=1>\|\rho(f_1+f_2)\|.$$ Let $a_i=\psi(f_i)$ for $i=1,2$. By the same argument as in Theorem \ref{FDI}, $\|a_i\|=\|f_i\|$ for $i=1,2$, and $\|a_1+a_2\|=\|f_1+f_2\|$. Also as in the proof of Theorem \ref{FDI}, $\|a_1+a_2\|>\|\rho(a_1+a_2)\|$ for any finite-dimensional representation $\rho$ of $A$. 
Since $\|a_i\|=\|f_i\|$ for $i=1,2$, we know that each $a_i$ attains its norm under some finite-dimensional representation of $A_0$. To see that they will attain their norms under some finite-dimensional representation of $A$, note that since
$$q(a_i)=f_i(\infty)\oplus f_i(n)$$ for $i=1,2$, we have that $$\|\pi_0(a_i)\|=\|f_i(n)\|=\|a_i\|$$ for $i=1,2$. Because $\pi_0\leq \pi'_0|_{A_0}$, it follows that $\|\pi_0(a_i)\|=\|a_i\|$ for $i=1,2$.

To show that $(3)\Rightarrow (1)$, assume that $A$  is not FDI. Again, by Theorem \ref{FDI}, there exists a subalgebra $A_0\subseteq A$ and a simple infinite-dimensional AF-algebra $B$ with inductive sequence $(B'_n)$ such that $A_0$ surjects onto $B$, and again, we take $B$ to be either $\mathbb{M}_{2^\infty}$ or $\mathbb{K}(\ell^2)$. 
By the Proposition \ref{subhom}, there exist irreducible representations $\pi_1$ and $\pi_2$ of $A_0$ and $\pi'_1$ and $\pi'_2$ of $A$ such that $\pi_i:A_0\to \mathbb{M}_{k_i}$ with $k_1<k_2$ and $\pi_i\leq \pi'_i|_{A_0}$ for $i=1,2$. 
As before, we intertwine $\mathbb{M}_{k_1}$ and $\mathbb{M}_{k_2}$ into the inductive sequence $(B_n)$ of $B$ and let $n_1<n_2$ so that now $B_{n_i}=\mathbb{M}_{k_i}$ for $i=1,2$. Let $\pi:A_0\to B$ be a surjection. As before, we use the Chinese remainder theorem to get a surjection 
$$q:=\pi\oplus \pi_1\oplus \pi_2: A_0\to B\oplus B_{n_1}\oplus B_{n_2}.$$
Letting $$\phi:=ev_\infty\oplus ev_{n_1}\oplus ev_{n_2}: T(B)\to B\oplus B_{n_1}\oplus B_{n_2},$$ the projectivity of $T(B)$ yields the following commutative diagram.  
\[
\begin{tikzcd}
& A_0\arrow[two heads]{d}{q}\\
T(B)\arrow{ur}{\psi}\arrow{r}[swap]{\phi} & B\oplus B_{n_1}\oplus B_{n_2}
\end{tikzcd}
\]
Let $b\in B_1$ self-adjoint with $\|b\|=1$. Define $f_1\in T(B)$ by 
 \[
   f_1(t) = \left\{
     \begin{array}{lcl}
       \frac{t}{n_1}b &;&  t\in (0,n_1]\\
       &&\\
       \frac{-1}{n_2-n_1}(t-n_2)b &;&  t\in(n_1,n_2]\\
       &&\\
       (1-e^{n_2-t})b &;& t\in(n_2,\infty].
     \end{array}
   \right.
\]

Define $f_2\in T(B)$ by 
 \[
   f_2(t) = \left\{
     \begin{array}{lcl}
       \frac{t}{n_2}b &;&  t\in (0,n_2]\\
       &&\\
       b &;& t\in(n_2,\infty].
     \end{array}
   \right.
\]

Then, $\|f_i\|=\|f_i(n_i)\|$ for $i=1,2$, and $$\|f_1f_2\|=\|f_1f_2(\infty)\|=\|b^2\|=1>\|f_1f_2(t)\|$$ for all $t<\infty$.
Let $a_1=\psi(f_1)$ and $a_2=\psi(f_2)$. As before, $\|a_i\|=\|f_i\|$ and $\|a_1a_2\|=\|f_1f_2\|$. Moreover, $\|a_1a_2\|>\|\rho(a_1a_2)\|$ for all finite-dimensional representations $\rho$ of $A$. Also as before, we have for $i=1,2$, $$\|a_i\|\geq \|\pi'_i(a_i)\|\geq \|\pi_i(a_i)\|=\|f_i(n_i)\|=\|a_i\|.\qedhere$$
 \end{proof}
 
 As a consequence, if a $C^*$-algebra is RFD but not FDI, then the set of elements in a $C^*$-algebra that achieve their norm under a finite-dimensional representation does not form an algebra. In particular, we have the following corollary.
 
 \begin{corollary}
 For $n<\infty$, there exists an element in $C^*(\mathbb{F}_n)\backslash \mathbb{CF}_n$ that achieves its norm under some finite-dimensional representation.
 \end{corollary}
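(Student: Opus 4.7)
The plan is to deduce this corollary directly by combining Theorem~\ref{subspace} with the Fritz--Netzer--Thom theorem. Let $S \subseteq C^*(\mathbb{F}_n)$ denote the set of elements that attain their norm under some finite-dimensional representation. The Fritz--Netzer--Thom result asserts that every element of the group algebra $\mathbb{C}\mathbb{F}_n$ attains its universal norm in a finite-dimensional unitary representation, which, read inside $C^*(\mathbb{F}_n)$, gives the inclusion $\mathbb{C}\mathbb{F}_n \subseteq S$.

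Next, I would invoke the two key structural facts about $C^*(\mathbb{F}_n)$: it is RFD by Choi's theorem, and it is not FDI (indeed it is primitive, so it admits a faithful infinite-dimensional irreducible representation). These are exactly the hypotheses of Theorem~\ref{subspace}, which then tells us that $S$ fails to be closed under addition.

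Finally, I would contrast this with the obvious observation that $\mathbb{C}\mathbb{F}_n$ \emph{is} closed under addition, being a subalgebra. If the inclusion $\mathbb{C}\mathbb{F}_n \subseteq S$ were an equality, $S$ would inherit additive closure from $\mathbb{C}\mathbb{F}_n$, contradicting Theorem~\ref{subspace}. Hence the inclusion is proper, and any element of $S \setminus \mathbb{C}\mathbb{F}_n$ furnishes the desired example.

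There is no genuine obstacle here: the whole point is that the heavy lifting has already been done in Theorem~\ref{subspace}, where the projectivity of AF-telescopes was used to manufacture two norm-attaining elements whose sum does not attain its norm finite-dimensionally. The corollary is essentially a repackaging of that theorem against the Fritz--Netzer--Thom input. One might remark, as the authors do implicitly, that this argument is purely existential: it does not produce an explicit element of $C^*(\mathbb{F}_n) \setminus \mathbb{C}\mathbb{F}_n$ attaining its norm finite-dimensionally, only guarantees that one exists.
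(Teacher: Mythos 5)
Your proposal is correct and is essentially the paper's own (implicit) argument: the authors deduce the corollary from Theorem~\ref{subspace} in exactly this way, noting that for an RFD but non-FDI algebra the norm-attaining set is not closed under addition, while $\mathbb{C}\mathbb{F}_n$ is an algebra contained in it by Fritz--Netzer--Thom, so the containment must be proper. (For $n=1$ the ``not FDI'' step fails, but there the statement is trivial since $C^*(\mathbb{Z})\cong C(\mathbb{T})$ is FDI; the interesting case, as in the paper, is $n\ge 2$.)
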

 

Theorem \ref{1} says we can find an element that attains the prescribed norms $\lambda_k$ for $k\leq N$. Theorem \ref{FDI} says that if a $C^*$-algebra is not FDI, then we can find an element that does not achieve its norm under any finite-dimensional representation. The following theorem says that if we assume the $C^*$-algebra is RFD and not FDI, then we can find an element that does both.

\begin{theorem}\label{RFDnotFDI} Suppose $A$ is RFD and has an infinite-dimensional irreducible representation. Then there exist $M_1 < M_2 < \ldots<\infty $ such that for any finite sequence
$0\le \lambda_1 \le \lambda_2 \le ... \le \lambda_N \le \lambda$ there is $a\in A$ such that $\|a\|_{\mathbb{M}_{M_k}} = \lambda_k$ for $1\le k \le N$, $\|a\| = \lambda$, and, in the case $\lambda > \lambda_N$, $\|a\|\neq \|a\|_{\mathbb{M}_{M_k}}$ for any $k<\infty$.   
\end{theorem}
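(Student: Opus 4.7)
The plan is to combine the AF-telescope machinery employed in Theorems \ref{FDI} and \ref{subspace} with careful bookkeeping of dimensions. As in the proof of Theorem \ref{subspace}, the hypothesis that $A$ is not FDI, combined with the case analysis in the proof of Theorem \ref{FDI}, yields a $C^*$-subalgebra $A_0\subseteq A$ together with a surjection $\pi\colon A_0\to B$ where I take $B=\mathbb{K}(\ell^2)$, equipped with the inductive sequence $B_m:=\mathbb{M}_m$ and non-unital embeddings $a\mapsto a\oplus 0$. Since $A$ is RFD and $A_0$ is non-subhomogeneous, Proposition \ref{subhom} supplies an unbounded sequence of irreducible representations $\pi_k\colon A_0\to \mathbb{M}_{n_k}$, each a sub-representation of some finite-dimensional $\pi'_k\colon A\to\mathbb{M}_{d_k}$. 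Passing to a subsequence, I may arrange $n_1<d_1<n_2<d_2<\cdots$, and I set $M_k:=d_k$.

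Given a prescribed sequence $0\leq\lambda_1\leq\cdots\leq\lambda_N\leq\lambda$, the Chinese Remainder Theorem applied to the pairwise coprime maximal ideals $\ker\pi,\ker\pi_1,\ldots,\ker\pi_N$ of $A_0$ (as in the proof of Theorem \ref{1}) yields a surjection $q:=\pi\oplus\pi_1\oplus\cdots\oplus\pi_N\colon A_0\to B\oplus\mathbb{M}_{n_1}\oplus\cdots\oplus\mathbb{M}_{n_N}$. Define $\phi:=ev_\infty\oplus ev_{n_1}\oplus\cdots\oplus ev_{n_N}\colon T(B)\to B\oplus\mathbb{M}_{n_1}\oplus\cdots\oplus\mathbb{M}_{n_N}$; projectivity of the AF-telescope $T(B)$ lifts $\phi$ through $q$ to a $\ast$-homomorphism $\psi\colon T(B)\to A_0$. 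Fix $b\in B_1$ with $\|b\|=1$ and choose a continuous non-decreasing function $g\colon(0,\infty]\to[0,\infty)$ which is identically $\lambda_k$ on the plateau $[n_k,d_k]$ for each $k\leq N$, interpolates linearly between consecutive plateaus, and on $[d_N,\infty]$ increases from $\lambda_N$ to $g(\infty)=\lambda$ while remaining strictly below $\lambda$ on $[d_N,\infty)$ when $\lambda>\lambda_N$. Define $f(t):=g(t)\,b\in T(B)$ and set $a:=\psi(f)$.

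For verification, note that $\pi(a)=ev_\infty(f)=\lambda b$ has norm $\lambda$, so $\lambda\leq\|a\|\leq\|f\|=\lambda$ and thus $\|a\|=\lambda$. For each $k\leq N$, $\pi_k(a)=f(n_k)=\lambda_k b$, hence $\|\pi_k(a)\|=\lambda_k$; since $\pi_k\leq\pi'_k|_{A_0}$ and $\pi'_k$ has dimension $M_k$, one gets $\|a\|_{\mathbb{M}_{M_k}}\geq\|\pi'_k(a)\|\geq\lambda_k$. For the matching upper bound, any representation $\sigma\colon A\to\mathbb{M}_m$ with $m\leq M_k=d_k$ gives a finite-dimensional representation $\sigma\circ\psi$ of $T(B)$ which, by Lemma \ref{pointeval}, factors through a finite direct sum of point evaluations $ev_{t_i}$ with $t_i\leq d_k$; hence $\|\sigma(a)\|\leq\max_i g(t_i)\leq g(d_k)=\lambda_k$. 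Running the same upper-bound argument at arbitrary $k$ gives $\|a\|_{\mathbb{M}_{M_k}}\leq g(d_k)<\lambda$ whenever $\lambda>\lambda_N$, yielding the final strict-inequality clause. The main subtlety is coordinating the subsequence so that $d_k<n_{k+1}$: this separation ensures that each plateau $[n_k,d_k]$ of $g$ both begins at a point realized by $\pi_k$ and ends precisely where the upper bound for $\|a\|_{\mathbb{M}_{M_k}}$ is evaluated, so that the upper and lower bounds exactly meet at $\lambda_k$.
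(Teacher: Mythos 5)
Your construction is, in substance, the paper's own proof restricted to the case $B=\mathbb{K}(\ell^2)$: the choice of the $M_k$ via Proposition \ref{subhom}, the Chinese Remainder surjection $q=\pi\oplus\pi_1\oplus\cdots\oplus\pi_N$, the lift of $\phi$ through $q$ by projectivity of the telescope, the plateau function, and the matching of lower bounds (evaluation at the start of a plateau, via $\pi_k\leq\pi'_k|_{A_0}$) with upper bounds (Lemma \ref{pointeval} forcing evaluation points $t\leq M_k$) are all correct in that case. The genuine gap is the opening assertion ``I take $B=\mathbb{K}(\ell^2)$.'' Theorem \ref{FDI} and its proof do not give you this choice: they produce a subalgebra $A_0$ surjecting onto $K(\mathcal{H}')\cong\mathbb{K}(\ell^2)$ only when $A$ is GCR, while for $A$ not GCR the only simple infinite-dimensional AF subquotient supplied (via Theorem \ref{non-GCR}) is the CAR algebra $\mathbb{M}_{2^\infty}$. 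The non-GCR case is not exotic here --- it is the generic one; e.g.\ $A=C^*(\mathbb{F}_n)$ is RFD with infinite-dimensional irreducible representations but is not GCR. Nor can you cheaply convert the CAR subquotient into a $\mathbb{K}(\ell^2)$ subquotient: the obvious route, pulling back a copy of $\mathbb{K}(\ell^2)$ sitting inside $\mathbb{M}_{2^\infty}$, fails because $\mathbb{K}(\ell^2)$ does not embed into the CAR algebra (its unique tracial state would have to take a common positive value on infinitely many mutually orthogonal, mutually equivalent projections). So the reduction you assume would itself require a nontrivial argument that you have not given.

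Concretely, in the CAR case your proof breaks at the definition of $\phi$: the fibers of $T(\mathbb{M}_{2^\infty})$ are only the algebras $\mathbb{M}_{2^j}$, so there is no evaluation $ev_{n_k}$ mapping onto $\mathbb{M}_{n_k}$, hence no map $ev_\infty\oplus ev_{n_1}\oplus\cdots\oplus ev_{n_N}$ to lift against $q$. This is precisely why the paper interleaves auxiliary indices $j_1<2^{j_1}<m_1\leq M_1<j_2<\cdots$: when $B=\mathbb{M}_{2^\infty}$ it evaluates at $j_i$, views $\mathbb{M}_{2^{j_i}}$ inside $\mathbb{M}_{m_i}$ via the standard (nonunital) inclusion, and makes $f$ identically $\lambda_i$ on the longer interval $[j_i,M_i]$, so that the lower bound $\|\pi_i(a)\|=f(j_i)=\lambda_i$ and the upper bound $\|a\|_{\mathbb{M}_{M_i}}\leq f(M_i)=\lambda_i$ still meet. (Alternatively one could intertwine the $\mathbb{M}_{m_i}$ into the CAR inductive sequence as in the proof of Theorem \ref{subspace}.) As written, your argument covers only the case in which a surjection onto $\mathbb{K}(\ell^2)$ happens to exist, and a second case must be added to make the proof complete.
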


\begin{proof} 
Since $A$ has an infinite-dimensional irreducible representation, by Theorem \ref{FDI} there is a $C^*$-subalgebra $A_0\subseteq A$, a simple infinite-dimensional AF-algbera $B$, and a surjective $\ast$-homomorphism $\pi: A_0 \to B$. Again, we can and do take $B$ to be either $\mathbb{M}_{2^\infty}$ or $\mathbb{K}(\ell^2)$. 

Since $A_0$ is a $C^*$-subalgebra of an RFD $C^*$-algebra, it is also RFD. Since $B$ is a quotient of $A_0$, $A_0$ is not subhomogeneous. It then follows from Proposition \ref{subhom} that we can build sequences $(j_i), (m_i)$, and $(M_i)$ of positive integers such that
$$j_1<2^{j_1} < m_1 \leq M_1<j_2<2^{j_2} < m_2 \leq M_2< \ldots, $$
and such that for each $i$, there exists an irreducible representation $\pi_i$ of $A_0$ and a representation $\pi'_i$ of $A$ such that dim$(\pi_i)=m_i$, dim$(\pi'_i)=M_i$, and $\pi_i\leq \pi'_i|_{A_0}$.

Since  $B$ is simple, using the Chinese Reminder Theorem in the same way as in the proof of Theorem \ref{1}, we conclude that 
   the $\ast$-homomorphism $$q=(\oplus_{i=1}^N\pi_{i})\oplus \pi: A_0 \to \mathbb{M}_{m_1}\oplus \ldots \oplus \mathbb{M}_{m_N} \oplus B$$ is surjective. 
   If $B = \mathbb{K}(\ell^2)$ let 
   $$\phi = (\oplus_{i=1}^N  (ev_{m_i})) \oplus ev_{\infty}: T(B) \to  \mathbb{M}_{m_1} \oplus \ldots \oplus \mathbb{M}_{m_N} \oplus B.$$  If $B = \mathbb{M}_{2^{\infty}}$ let  $$\phi = (\oplus_{i=1}^N  (ev_{j_i})) \oplus ev_{\infty}: T(B) \to  \mathbb{M}_{m_1} \oplus \ldots \oplus \mathbb{M}_{m_N} \oplus B$$ where we view $\mathbb{M}_{2^{j_i}}$ as a subalgebra of $\mathbb{M}_{m_i}$ via the standard inclusion.
   Since $T(B)$ is projective, $\phi$ lifts to some $*$-homomorphism $\psi: T(B) \to A_0$. Thus $$q\circ \psi = \phi.$$  

   Choose $f\in C_0(0,\infty]$ so that $f$ is nonnegative, nondecreasing, $f(\infty)=\lambda$, and for each $i\leq N$,
   $$f|_{[j_i,M_i]}\equiv \lambda_i;$$ in case $\lambda>\lambda_N$, we also require $f(t)<\lambda$ for each $t<\infty$. We can view $f$ as an element of $T(B)$ by the standard embdedding of $\mathbb{C}$ to $B$. Let $a=\psi(f)$.

   By Lemma \ref{pointeval}, for any $i\in \mathbb{N}$ and any representation $\rho$ of $A$ of dimension not larger than $M_i$  we have 
   $$\|\rho(a)\| = \|\rho(\psi(f))\| \le \|f(M_i)\|.$$ Hence for any $i\le N$
   \begin{equation}\label{le}\|a\|_{\mathbb{M}_{M_i}} \le \lambda_i,\end{equation} and for all $i\in \mathbb{N}$, \begin{equation}\label{lelambda}\|a\|_{\mathbb{M}_{M_i}} \le \lambda.\end{equation}
   On the other hand, when $B= \mathbb{K}(\ell^2)$, 
   $$(\pi_{1}(a), \ldots, \pi_{N}(a), \pi(a)) = q(a) = q(\psi(f)) = \phi(f) = (f(m_1), \ldots, f(m_N), f(\infty)),$$ 
   and we conclude that $\pi_{i}(a) = f(m_i)=\lambda_i$ for $i\le N$; when $B = \mathbb{M}_{2^{\infty}}$, 
   $$(\pi_{1}(a), \ldots, \pi_{N}(a), \pi(a)) = q(a) = q(\psi(f)) = \phi(f) = (f(j_1), \ldots, f(j_N), f(\infty)),$$ 
  and we conclude that $\pi_i(a) = f(j_i)=\lambda_i$ for $i\le N$.
      In either case,
   \begin{equation}\label{ge}\|a\|_{\mathbb{M}_{M_i}} \ge \|\pi_i'(a)\|\ge \|\pi_i(a)\|=  \lambda_i,\end{equation} for $i\le N$.
   By (\ref{le}) and (\ref{ge}) we have for each $i\leq N$, \begin{equation}\label{equallambdai}\|a\|_{\mathbb{M}_{M_i}} = \lambda_i.\end{equation}
   We also have $$\lambda = \|\phi(f)\|  = \|q(\psi(f))\| =\|q(a)\| \le \|a\| = \|\psi(f)\| \le \|f\| = \lambda,$$ and thus \begin{equation}\label{normlambda}\|a\| = \lambda.\end{equation} By (\ref{lelambda}), (\ref{equallambdai}) and (\ref{normlambda}) we are done.   
  \end{proof}

\section{A Bound on Dimension}\label{section6}

Fritz, Netzer, and Thom show in \cite[Lemma 2.7]{FNT} that, for any $n<\infty$, any element in $\mathbb{CF}_n$ will achieve its norm in a representation of dimension no more than $4n^\ell$, where $\ell$ is the length of the longest word in the support of the element. In this section, we improve the bound on the dimension for binomials in $\mathbb{CF}_n$. Namely, we show the following proposition.

\begin{proposition}\label{dimestimate}
Let $\alpha,\beta\in \mathbb{C}$. Let $w_1,w_2\in\mathbb{F}_n$ be distinct reduced words for some $n<\infty$, and let $\ell$ be the length of the reduced word $w_2^{-1}w_1$. Then, there exists a representation $\pi:C^*(\mathbb{F}_n)\to \mathbb{M}_{2\ell}$ such that 
\begin{align*}
    \|\pi(\alpha w_1+\beta w_2)\|=|\alpha|+|\beta|.
\end{align*}
\end{proposition}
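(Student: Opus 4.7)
The plan is to deduce the proposition from Theorem \ref{eigenvalue} via a standard eigenvalue argument. Set $w := w_2^{-1}w_1$, the reduced word of length $\ell$. For any unitary representation $\pi$ of $\mathbb{F}_n$, the unitarity of $\pi(w_2)$ gives
\begin{equation*}
    \|\pi(\alpha w_1 + \beta w_2)\| \;=\; \|\pi(w_2)^*\pi(\alpha w_1 + \beta w_2)\| \;=\; \|\alpha\pi(w) + \beta I\|,
\end{equation*}
and since $\alpha\pi(w) + \beta I$ is normal with spectrum $\{\alpha\mu + \beta : \mu \in \sigma(\pi(w))\}$, this norm equals $\max\{|\alpha\mu + \beta| : \mu\in\sigma(\pi(w))\}$. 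By the triangle inequality each such value is at most $|\alpha|+|\beta|$, so it suffices to find $\pi$ on $\mathbb{M}_{2\ell}$ for which the maximum is attained.

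The cases $\alpha = 0$ or $\beta = 0$ are trivial, handled by any representation making the nonzero word a nonzero unitary (for instance, the trivial character), padded by zero to dimension $2\ell$ if necessary. So assume $\alpha,\beta \neq 0$. A short calculation shows that for $\mu\in\mathbb{T}$, the equality $|\alpha\mu + \beta| = |\alpha|+|\beta|$ holds exactly when $\mu = \lambda_0 := \overline{\alpha}\beta/(|\alpha|\,|\beta|)$. Thus the task reduces to exhibiting $\pi: C^*(\mathbb{F}_n) \to \mathbb{M}_{2\ell}$ with $\lambda_0 \in \sigma(\pi(w))$, at which point any unit eigenvector $\xi$ of $\pi(w)$ for the eigenvalue $\lambda_0$ yields $\|\pi(\alpha w_1 + \beta w_2)\xi\| = |\alpha|+|\beta|$.

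If $w$ is balanced, this is exactly the content of Theorem \ref{eigenvalue} applied to $w$ and the target unimodular number $\lambda_0$. If $w$ is not balanced, then its image under abelianization $\mathbb{F}_n \to \mathbb{Z}^n$ is a nonzero tuple $(a_1,\ldots,a_n)$; picking any index $i$ with $a_i\neq 0$ and an $a_i$-th root $\zeta\in\mathbb{T}$ of $\lambda_0$, the one-dimensional character defined by $\chi(x_i)=\zeta$ and $\chi(x_j)=1$ for $j\neq i$ sends $w$ to $\lambda_0$. Extending $\chi$ to $C^*(\mathbb{F}_n)$ and padding with a zero summand produces the required representation on $\mathbb{M}_{2\ell}$.

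The substantive work is all packaged inside Theorem \ref{eigenvalue}; Proposition \ref{dimestimate} itself is a direct corollary, with the split into balanced and non-balanced cases being the only extra bookkeeping. The main obstacle in the broader program is therefore Theorem \ref{eigenvalue}, not this proposition.
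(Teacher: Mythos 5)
Your argument is correct and follows essentially the same route as the paper: reduce via unitarity of $\pi(w_2)$ to showing $\overline{\alpha}\beta/(|\alpha||\beta|)\in\sigma(\pi(w))$ for $w=w_2^{-1}w_1$, dispose of the non-balanced case with a one-dimensional character, and invoke Theorem \ref{eigenvalue} for the balanced case. Your explicit treatment of the degenerate cases ($\alpha\beta=0$, non-balanced $w$) is just the bookkeeping the paper handles in its preliminary discussion before stating Theorem \ref{eigenvalue}.
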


We say a word $w$ in $\mathbb{F}_n$ is \textit{balanced} if any representation $C^*(\mathbb{F}_n)\to \mathbb{C}$ maps $w\mapsto 1$, e.g. $w=x_1x_2x_1^{-1}x_2^{-1}$.
 Notice that if the reduced word $w_2^{-1}w_1$ is not balanced, then this norm will be achieved by a representation $C^\ast(\mathbb{F}_n)\to \mathbb{C}$ sending all but one generator to $1$. 
For example, the word $w=x_1x_2x_1^{-2}$ is not balanced, and the map $C^*(\mathbb{F}_2)\to\mathbb{C}$ sending $x_1\to 1$ and $x_2\to -1$ will send $1-w$ to an element in $\mathbb{C}$ of norm 2.  However, for a balanced word there is no such map. Since balanced words constitute the class of nontrivial examples, we will assume $w:=w_2^{-1}w_1$ is balanced.

Notice that for any representation $\pi:C^*(\mathbb{F}_n)\to B(\mathcal{H})$, 
\begin{align*}
\|\pi(\alpha w_1+\beta w_2)\|&=\|\alpha \pi(w_2^{-1} w_1)+\beta I_\mathcal{H}\|\\
 &=\|\alpha \pi(w)+\beta I_\mathcal{H}\|\\
 &=\max_{\lambda\in \sigma(\pi(w))}|\alpha\lambda+\beta|\\
\end{align*}
where $ w:= w_2^{-1} w_1$ and $\sigma(\pi(w))$ denotes the spectrum of an operator $\pi(w)\in B(\mathcal{H})$. This maximum equals $|\alpha|+|\beta|$ if and only if $\frac{\text{sgn}(\beta)}{\text{sgn}(\alpha)}\in \sigma (\pi(w))$, and so Proposition \ref{dimestimate} will follow from the following theorem. 

\begin{theorem}\label{eigenvalue}
For any nontrivial, balanced, reduced word $ w\in \mathbb{F}_n$ of length $\ell$ and any $\lambda\in \mathbb{T}$, there exists a representation $\pi:C^*(\mathbb{F}_n)\to \mathbb{M}_{2\ell}$ such that $\lambda\in \sigma(\pi(w))$. 
\end{theorem}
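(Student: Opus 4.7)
My approach is to construct unitaries $U_1,\ldots,U_n\in U(2\ell)$ explicitly so that the image of $w$ under the corresponding representation has $\lambda$ as an eigenvalue. I would first reduce to a cyclically reduced word: write $w = u\tilde{w}u^{-1}$ with $\tilde{w}$ cyclically reduced, so that $\ell' := |\tilde{w}|\le\ell$. Because cyclic reduction preserves both nontriviality and the abelianization class, $\tilde{w}$ is again nontrivial, reduced, and balanced; and for any representation $\pi$, the operators $\pi(w)$ and $\pi(\tilde{w})$ are conjugate, hence cospectral. So it suffices to find a representation of dimension at most $\ell'$ in which $\tilde{w}$ has $\lambda$ in its spectrum, and then pad to dimension $2\ell$.

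Writing $\tilde{w} = y_1y_2\cdots y_{\ell'}$ with $y_k = x_{i_k}^{\epsilon_k}$, I would take $\mathcal{H} = \mathbb{C}^{\ell'}$ with an orthonormal basis $e_0,e_1,\ldots,e_{\ell'-1}$ and set $e_{\ell'} := \lambda e_0$. The word $\tilde{w}$ specifies a path through these vectors: with the convention $\pi(\tilde{w}) = U_{i_1}^{\epsilon_1}\cdots U_{i_{\ell'}}^{\epsilon_{\ell'}}$, I would impose at each position $k$ that $U_{i_k}^{\epsilon_k}$ sends one basis vector of the path to the next. This partially defines each $U_i$ as the collection of constraints arising from its occurrences in $\tilde{w}$: at a positive occurrence $y_k = x_i$, it sends $e_{k-1}\mapsto e_k$, and at a negative occurrence $y_k = x_i^{-1}$, it sends $e_k\mapsto e_{k-1}$. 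Under these constraints, $\pi(\tilde{w})e_0 = e_{\ell'} = \lambda e_0$, so $\lambda$ is automatically an eigenvalue of $\pi(\tilde{w})$ as soon as each $U_i$ extends to a unitary.

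The key step is the compatibility check: I need the partial definition of each $U_i$ to be a well-defined isometry from one orthonormal subset of the path to another, so it extends to a unitary on $\mathbb{C}^{\ell'}$. A clash among the constraints on some $U_i$ would require either two adjacent positions of $\tilde{w}$ of the form $x_i^{\pm1}x_i^{\mp1}$, impossible since $\tilde{w}$ is reduced, or such a pair formed cyclically at the seam $y_{\ell'}y_1$, where the identification $e_{\ell'} = \lambda e_0$ glues the two ends of the path, impossible since $\tilde{w}$ is cyclically reduced. This case analysis, carefully accounting for the cyclic identification, is the main bookkeeping obstacle. Once consistency is verified, each $U_i$ is extended to a unitary on $\mathbb{C}^{\ell'}$ by any unitary between the orthogonal complements of its source and target; for generators $x_j$ not occurring in $\tilde{w}$, take $U_j = I$. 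The resulting representation $\pi_0 : C^*(\mathbb{F}_n)\to\mathbb{M}_{\ell'}$ has $\lambda\in\sigma(\pi_0(\tilde{w})) = \sigma(\pi_0(w))$, and direct-summing with the representation on $\mathbb{C}^{2\ell-\ell'}$ sending each generator to the identity produces the desired $\pi : C^*(\mathbb{F}_n)\to\mathbb{M}_{2\ell}$ with $\lambda\in\sigma(\pi(w))$.
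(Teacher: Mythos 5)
Your proof is correct, but it takes a genuinely different route from the paper's. The paper argues in two steps: Lemma \ref{Sigma_d} shows, by a connectedness argument with the word map on $\prod_k \mathcal{U}(d)$, that for each $d$ the set $\Sigma_d=\bigcup\{\sigma(\pi(w))\mid \pi\colon C^*(\mathbb{F}_n)\to\mathbb{M}_d\}$ is a closed arc of $\mathbb{T}$ symmetric about $\mathbb{R}$ and containing $1$; Lemma \ref{-1} then exhibits permutation matrices in $\mathcal{U}(2\ell)$ for which $-1\in\sigma(w(u_1,\dots,u_n))$, forcing $\Sigma_{2\ell}=\mathbb{T}$. You instead construct, for each individual $\lambda\in\mathbb{T}$, an explicit representation in which the cyclically reduced word traces a cyclic path through an orthonormal basis, twisted by the scalar $\lambda$ at the seam. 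The consistency check you defer does close exactly as you claim: a well-definedness or injectivity clash among the partial constraints on a given $U_i$ forces two constraints coming from (cyclically) consecutive letters with opposite exponents, i.e.\ an inverse pair ruled out by $\tilde w$ being reduced (linear case) or cyclically reduced (seam case); only the seam constraint carries the scalar $\lambda$, so each partially defined $U_i$ maps an orthonormal set to an orthonormal set up to unimodular scalars and extends to a unitary. One cosmetic slip: with your stated convention $\pi(\tilde w)=U_{i_1}^{\epsilon_1}\cdots U_{i_{\ell'}}^{\epsilon_{\ell'}}$ the rightmost letter acts first, so the constraint at position $k$ should send $e_{\ell'-k}$ to $e_{\ell'-k+1}$ rather than $e_{k-1}$ to $e_k$; your indexing implicitly reads the word in the opposite order, and reindexing fixes it. What your approach buys: it never uses balancedness and realizes $\lambda$ already in dimension $\ell'\le\ell$ (the cyclically reduced length), so it is more elementary than the paper's proof and sharper --- it confirms the paper's own remark after Lemma \ref{-1} that $2\ell$ is more space than needed, and would improve the bound in Proposition \ref{dimestimate} from $2\ell$ to $\ell$. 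What the paper's route buys in exchange is the standalone description of $\Sigma_d$ as a symmetric arc for every $d$, which is of independent interest.
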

 For the proof we will first need two lemmas. 

  \begin{lemma}\label{Sigma_d}
  Let $ w\in\mathbb{F}_n$ be a nontrivial, balanced, reduced word. Define for each $1\leq d<\infty$
 \begin{align*}
     \Sigma_d:=\bigcup \{\sigma(\pi( w))|\ \pi:C^*(\mathbb{F}_n)\to \mathbb{M}_d\}.
 \end{align*}
 Then for each $1\leq d<\infty$, there is a $\theta_d\in [0,\pi]$ such that $$\Sigma_d=\{e^{i\theta}|\ \theta\in [-\theta_d,\theta_d]\}$$ with $\theta_d\leq \theta_{d+1}$ for each $1\leq d<\infty$.
 \end{lemma}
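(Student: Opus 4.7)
The plan is to establish four separate features of $\Sigma_d$, namely (i) $\Sigma_d \subseteq \mathbb T$ and is closed, (ii) $1 \in \Sigma_d$, (iii) $\Sigma_d$ is invariant under complex conjugation, and (iv) the ``arc condition'': if $e^{i\theta_*} \in \Sigma_d$ with $\theta_* \in [0,\pi]$, then $e^{i\mu} \in \Sigma_d$ for every $\mu \in [0,\theta_*]$. Together these force $\Sigma_d = \{e^{i\theta} : \theta \in [-\theta_d, \theta_d]\}$ for $\theta_d := \sup\{\theta \in [0,\pi] : e^{i\theta} \in \Sigma_d\}$. The monotonicity $\theta_d \leq \theta_{d+1}$ is then immediate by padding any $\pi : \mathbb F_n \to \mathbb M_d$ with the one-dimensional trivial representation $x_i \mapsto 1$: since $w$ is balanced, $(\pi \oplus \mathrm{triv})(w) = \pi(w) \oplus 1$ has the same spectrum as $\pi(w)$ together with $1$.

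The easy ingredients are short. Each $\pi(w)$ is unitary, so $\Sigma_d \subseteq \mathbb T$. Closedness follows since $\{(u_1,\ldots,u_n,\lambda) \in U(d)^n \times \mathbb T : \det(\lambda I - w(u_1,\ldots,u_n)) = 0\}$ is closed in a compact space and projects continuously onto $\Sigma_d$, which is therefore compact. The point $1 \in \Sigma_d$ because $w$ is balanced, so the trivial representation $x_i \mapsto I_d$ sends $w$ to $I_d$. For conjugation invariance, given any representation $\pi$, the entrywise conjugate $\bar\pi$ defined by $\bar\pi(x_i) = \overline{\pi(x_i)}$ is again a representation (unitaries are closed under entrywise conjugation), and $\bar\pi(w) = \overline{\pi(w)}$, whose eigenvalues are the conjugates of those of $\pi(w)$.

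The main obstacle is the arc condition. The idea is a homotopy argument: given $\pi$ with $e^{i\theta_*} \in \sigma(\pi(w))$, write each $\pi(x_i) = \exp(iH_i)$ for a Hermitian $H_i$ (surjectivity of $\exp$ onto $U(d)$) and interpolate via $\pi_t(x_i) := \exp(itH_i)$, $t \in [0,1]$, which is a representation of $\mathbb F_n$ for every $t$ since $\mathbb F_n$ is free. The path $t \mapsto \pi_t(w)$ is continuous and starts at $I_d$. By continuity of roots of polynomials (e.g.\ Kato's theorem) there exist continuous functions $\lambda_1,\ldots,\lambda_d : [0,1] \to \mathbb T$ tracking the eigenvalues of $\pi_t(w)$, with $\lambda_j(0) = 1$ for all $j$ and $\lambda_{j_*}(1) = e^{i\theta_*}$ for some $j_*$.

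Lift $\lambda_{j_*}$ to a continuous $\eta : [0,1] \to \mathbb R$ with $\eta(0) = 0$ and $e^{i\eta(t)} = \lambda_{j_*}(t)$; then $\eta(1) = \theta_* + 2\pi k$ for some $k \in \mathbb Z$, and the image of $\eta$ contains the interval from $0$ to $\theta_* + 2\pi k$ by the intermediate value theorem. If $k \geq 0$, this interval contains $[0,\theta_*]$, and since $e^{i\eta(t)} \in \Sigma_d$ for every $t$, we obtain $e^{i\mu} \in \Sigma_d$ for all $\mu \in [0,\theta_*]$. If $k \leq -1$, the interval contains $[\theta_*-2\pi,\, 0] \supseteq [-\pi, 0]$, so $\Sigma_d \supseteq \{e^{i\mu} : \mu \in [-\pi, 0]\}$, and by the already-established conjugation symmetry $\Sigma_d \supseteq \{e^{i\mu} : \mu \in [0,\pi]\} \supseteq [0,\theta_*]$. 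Either way the arc condition holds, and combining the four ingredients gives $\Sigma_d = \{e^{i\theta} : \theta \in [-\theta_d, \theta_d]\}$ as claimed.
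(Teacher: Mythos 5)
Your proof is correct, and it reaches the arc structure by a genuinely different mechanism than the paper. The paper's proof also begins with the two easy observations you make (conjugation symmetry via entrywise conjugation of the unitaries, and $1\in\Sigma_d$), but then gets the arc in one stroke: it exhibits $\Sigma_d$ intersected with the closed upper half-circle as the continuous image of the compact, path-connected space $\prod_{k=1}^n\mathcal{U}(d)$ under the map $u\mapsto\gamma\bigl(\psi(w(u))\bigr)$, where $\psi(v)=\min_{\lambda\in\sigma(v)}\mathrm{Re}\,\lambda$ and $\gamma(r)=r+i\sqrt{1-r^2}$; since this continuous selection of the ``extremal'' eigenvalue is automatically well defined and the domain is connected, the image is a sub-arc of the upper half-circle joining $1$ to $e^{i\theta_d}$, and the union with its conjugate is $\Sigma_d$. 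You instead prove a local ``arc condition'' by homotoping the given representation to the trivial one along $\pi_t(x_i)=\exp(itH_i)$, tracking a continuous eigenvalue branch (Kato's one-parameter selection theorem), lifting it through the covering $\mathbb{R}\to\mathbb{T}$, and applying the intermediate value theorem, with the possible negative winding case handled by the conjugation symmetry. Both arguments ultimately rest on connectedness of the unitary group; the paper's min-real-part trick avoids eigenvalue-perturbation theory and covering-space lifting entirely, while your route is more elementary in spirit at each step and has the merit of making explicit two points the paper treats as clear, namely the compactness/closedness of $\Sigma_d$ (needed so that $\theta_d$ is attained) and the nesting $\Sigma_d\subseteq\Sigma_{d+1}$ via padding with the trivial one-dimensional representation, which gives $\theta_d\leq\theta_{d+1}$.
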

 
 \begin{proof}
 Since $(\Sigma_d)$ is clearly a nested sequence, we need only to show that for each $1\leq d<\infty$, 
 $$\Sigma_d=\{e^{i\theta}|\ \theta\in [-\theta_d,\theta_d]\}$$ for some $\theta_d\in [0,\pi]$. 
 
 Fix $1\leq d<\infty$. We first, observe that $\Sigma_d$ is symmetric about $\mathbb{R}$ and centered at $1$. Indeed, since $\lambda\in \sigma( w(u_1,...,u_n))$ for some $u_1,...,u_n\in\mathcal{U}(d)$ implies that $\overline{\lambda}\in\sigma( w(\overline{u_1},...,\overline{u_n}))$, where $\overline{u}$ denotes the complex conjugate of the matrix $u$. Clearly $1\in \Sigma_d$. 
 
By these observations, it will suffice to show that $\Sigma_d$ is the union of two continuous images of the compact, path-connected space $\prod_{k=1}^n \mathcal{U}(d)$, which have nontrivial intersection. 

To that end, let $w:\prod_{k=1}^n \mathcal{U}(d)\to \mathcal{U}(d)$ denote the word map; let $\psi:\mathcal{U}(d)\to [-1,1]$ be given by
$u\mapsto \min_{\lambda\in \sigma(u)}\text{Re}(\lambda),$
and let $\gamma:[-1,1]\to \{e^{i\theta}|\ \theta\in[0,\pi]\}$ be given by 
$r\mapsto r+i\sqrt{1-r^2}.$
Then, $\gamma\circ\psi\circ w$ is a continuous map on $\prod_{k=1}^n \mathcal{U}(d)$ whose image is the compact, path-connected arc $\{e^{i\theta}|\ \theta\in[0,\theta_d]\}$ where\\ $\theta_d=\max\{\theta\in [0,\pi]|\ e^{i\theta}\in\Sigma_d\}$. 
Likewise, the image of $\bar{\gamma}\circ \psi\circ w$ is the path-connected arc $\{e^{i\theta}|\ \theta\in[-\theta_d,0]\}$ for the same $\theta_d\in [0,\pi]$.
Hence, the union of the two is the compact, path connected arc $\{e^{i\theta}|\ \theta\in [-\theta_d,\theta_d]\}=\Sigma_d.$
   \end{proof}

By Lemma \ref{Sigma_d}, we can conclude that, if for some $1\leq d<\infty$ there exists a representation $\pi:C^*(\mathbb{F}_n)\to \mathbb{M}_d$ such that $-1\in \sigma(\pi(1-w))$, then $\Sigma_d=\mathbb{T}.$
The following lemma tells us for which $1\leq d<\infty$ we can expect such a representation. 
\begin{lemma}\label{-1}
For any nontrivial, balanced, reduced word $ w\in \mathbb{F}_n$ of length $\ell$, there are (permutation) matrices $u_1,...,u_n\in \mathcal{U}(2\ell)$ such that $-1\in\sigma( w(u_1,...,u_n))$. 
\end{lemma}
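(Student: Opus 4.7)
The strategy is to construct permutations of $\{1,\dots,2\ell\}$ under which $w$ evaluates to a permutation containing a $2$-cycle; the corresponding permutation matrix then has $e_i - e_j$ as an eigenvector for the eigenvalue $-1$. I would build such permutations by labeling the directed edges of a $2\ell$-cycle with the letters of $w$ read twice around, and then reading off each $u_j$ from this labeling.

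First I would reduce to the case where $w$ is cyclically reduced. Any reduced word can be written as $w = v w' v^{-1}$ with $w'$ cyclically reduced and of length $\ell' \le \ell$, and $w'$ is again balanced and nontrivial. Given permutation matrices $v_1,\dots,v_n \in \mathcal{U}(2\ell')$ with $-1 \in \sigma(w'(v_1,\dots,v_n))$, I can pad by identity to obtain $u_j := v_j \oplus I_{2\ell - 2\ell'} \in \mathcal{U}(2\ell)$. Then $w(u_1,\dots,u_n)$ is conjugate (via $v(u_1,\dots,u_n)$) to $w'(u_1,\dots,u_n) = w'(v_1,\dots,v_n) \oplus I_{2\ell - 2\ell'}$, preserving $-1$ in the spectrum.

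So assume $w = a_1 \cdots a_\ell$ is cyclically reduced with $a_k = g_{i_k}^{\epsilon_k}$. Arrange $\{1,\dots,2\ell\}$ on a directed cycle and label the edge $k \to k+1 \pmod{2\ell}$ by $b_k$, where $(b_1,\dots,b_{2\ell}) := (a_\ell, a_{\ell-1},\dots,a_1, a_\ell, a_{\ell-1},\dots,a_1)$ is the reversed word $w^R$ written twice. For each generator $g_j$ I declare: $u_j(k) = k+1$ whenever $b_k = g_j$, and $u_j(k+1) = k$ whenever $b_k = g_j^{-1}$; then extend $u_j$ to a full permutation of $\{1,\dots,2\ell\}$ arbitrarily on the remaining indices. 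A direct traversal using the standard matrix-multiplication convention shows that $u_{i_1}^{\epsilon_1} \cdots u_{i_\ell}^{\epsilon_\ell}$ applied to $e_1$ follows the edges of the cycle from vertex $1$ to vertex $\ell+1$, and a second pass returns to $1$; hence $(1,\ell+1)$ is a $2$-cycle of $w(u_1,\dots,u_n)$ and $e_1 - e_{\ell+1}$ is an eigenvector with eigenvalue $-1$.

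The main obstacle is verifying that each $u_j$ is well defined as a partial bijection before extension, i.e., that distinct edges carrying a $g_j$-label never assign conflicting values on the input or output side. A case check shows that an input or output conflict for $u_j$ occurs precisely when the cyclic sequence $b_1 b_2 \cdots b_{2\ell}$ contains an adjacent inverse pair $g_j g_j^{-1}$ or $g_j^{-1} g_j$ (including the wrap-around from $b_{2\ell}$ to $b_1$, and the boundary between the two copies of $w^R$). Cyclic reducedness of $w$ is exactly what forbids such adjacencies in $w^R w^R$, which is why the preliminary reduction step is essential; once consistency is guaranteed, the remainder is routine.
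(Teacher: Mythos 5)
Your proposal is correct and is essentially the paper's own argument: both constructions wrap the word twice around a cycle on $2\ell$ points, read off each generator as a partial permutation from the edge labels, and conclude that $w$ maps to a permutation containing the $2$-cycle $(1,\ell+1)$, hence has $-1$ in its spectrum. The differences are only bookkeeping: you reduce to a cyclically reduced word and pad with an identity block (and verify consistency via the adjacent-inverse-pair criterion), while the paper instead conjugates so that the first and last letters involve distinct generators before imposing the same edge conditions.
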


\begin{proof}
The goal will be to construct a map from $\mathbb{F}_n$ into $\mathcal{S}_{2\ell}$, which maps $ w$ to a permutation in $\mathcal{S}_{2\ell}$ containing the two-cycle $(1,\ell+1)$. Composing this map with the natural inclusion of $\mathcal{S}_{2\ell}$ into $\mathcal{U}(2\ell)$ will yield a map from $\mathbb{F}_n$ to $\mathcal{U}(2\ell)$ mapping $ w$ to a permutation matrix with $-1$ as an eigenvalue. 

Write $ w=x_{i_\ell}^{\epsilon_\ell}\cdot\cdot\cdot x_{i_1}^{\epsilon_1}$ where $\epsilon_i\in\{\pm 1\}$ and $i_k\in \{1,...,\ell\}$. 

First, we claim that we may assume $i_\ell\neq i_1$. 
Indeed, suppose $i_\ell=i_1$. Let $d\geq 0$ and $u_1,...,u_n\in \mathcal{U}(d)$, and denote $w(u_1,...,u_n):=u_{i_\ell}^{\epsilon_\ell}\cdot\cdot\cdot u_{i_1}^{\epsilon_1}$. 
If $\epsilon_1=-\epsilon_\ell$, then $$\sigma(w(u_1,...,u_n))=\sigma(u_{i_1}^{-\epsilon_1} w(u_1,...,u_n) u_{i_1}^{-\epsilon_1}).$$ 
This reduction will not change the length of the word and must terminate since $ w$ is balanced, reduced, and nontrivial. Hence, we may assume $\epsilon_1=\epsilon_\ell$. Moreover, since $-1\in \sigma(w(u_1,...,u_n))$ iff $-1\in \sigma((w(u_1,...,u_n))^*)$, we may assume $\epsilon_1=1$. Let $j$ and $k$ denote the multiplicity of $x_{i_1}$ at the end and beginning of $w$, respectively, and assume $j\leq k$.
Now, write $ w=x_{i_1}^j x_{i_{\ell-j}}^{\epsilon_{\ell-j}}\cdot\cdot\cdot x_{i_{k+1}}^{\epsilon_{k+1}}x_{i_1}^k$ where $x_{i_{\ell-j}}\neq x_{i_1}\neq x_{i_{k+1}}$. Then, $$\sigma (w(u_1,...,u_n))=\sigma(u_{i_1}^{-j} w(u_1,...,u_n)u_{i_1}^j).$$ Again, the length of the word is unchanged. Hence, we assume $i_1\neq i_\ell$.

Now, define a map $\phi:\mathbb{F}_n\to \mathcal{S}_{2\ell}$ by mapping the generators $x_i$ to permutations $\sigma_i$ such that for each $1\leq k\leq \ell$ we require that

\begin{align}\label{ell}
    \sigma_{i_k}(k)=k+1, &\ \text{if}\ \epsilon_k=1 \nonumber\\
    \text{and}\\
    \sigma_{i_k}^{-1}(k)=k+1, &\ \text{if}\ \epsilon_k=-1.\nonumber
\end{align}
Note that, since $\ell\neq 1$ and $i_1\neq i_\ell$, the values for $\sigma_{i_1}^{\epsilon_1}(\ell+1)$ and $\sigma_{i_\ell}^{-\epsilon_\ell}(1)$ are not determined by the conditions in (\ref{ell}), which means $\phi( w)^{-1}(1)$ and $\phi( w)(\ell+1)$ are not determined by the conditions in (\ref{ell}). Hence, we are free to require for $1\leq k\leq \ell-1$, that
\begin{align}\label{ell+1}
    \sigma_{i_k}(\ell+k)=l+k+1, &\ \text{if}\ \epsilon_k=1\nonumber\\
    \text{and}\\
    \sigma_{i_k}^{-1}(\ell+k)=l+k+1, &\ \text{if}\ \epsilon_k=-1;\nonumber
\end{align}
and for $\sigma_{i_\ell}$ that
\begin{align}\label{2ell}
    \sigma_{i_\ell}(2\ell)=1, &\ \text{if}\ \epsilon_\ell=1\nonumber\\
    \text{and}\\
    \sigma_{i_\ell}(1)=2\ell, &\ \text{if}\ \epsilon_\ell=-1.\nonumber
\end{align}
Aside from these restrictions, the $\sigma_{i_k}^{\epsilon_k}$ are free to take any values. The following table provides an illustration of the enforced mappings.

\begin{center}
\begin{tabular}{cccccccccccccccccc}
& 1&& 2 && ... && $\ell$ && $\ell+1$ &&$\ell+2$ &&... && $2\ell$ && 1\\
$\sigma_{i_1}^{\epsilon_1}$ &&$\searrow$ &&&&&&&&$\searrow$&&&&&&&\\
& 1&& 2 && ... && $\ell$ && $\ell+1$ &&$\ell+2$ &&... && $2\ell$ && 1\\
$\sigma_{i_2}^{\epsilon_2}$ &&&&$\searrow$ &&&&&&&&$\searrow$ &&&&&\\
$\vdots$ &&&&&&&&&&&&&&&&&\\
& 1&& 2 && ... && $\ell$ && $\ell+1$ &&$\ell+2$ &&... && $2\ell$ && 1\\
$\sigma_{i_\ell}^{\epsilon_\ell}$ &&&&&&&&$\searrow$&&&&&&&&$\searrow$&\\
& 1&& 2 && ... && $\ell$ && $\ell+1$ &&$\ell+2$ &&... && $2\ell$ && 1\\
\end{tabular}
\end{center}

The conditions in (\ref{ell}) guarantee that $\phi( w)=\sigma_{i_\ell}^{\epsilon_\ell}...\sigma_{i_1}^{\epsilon_1}$ will map $1\mapsto \ell+1$.  The conditions in (\ref{ell+1}) and (\ref{2ell}) guarantee that $\phi( w)$ maps $\ell+1\mapsto 1$.
Hence, $\phi( w)$ has the two-cycle $(1,\ell+1)$ as desired. 
\end{proof}
 
\begin{remark}
 This proof likely uses too much ``space," and the argument may still work with $\mathcal{U}(l+1)$ instead. One would have to be very careful with choosing permutations with respect to the given word.
\end{remark}

We are now ready to prove Theorem \ref{eigenvalue}.

\begin{proof}[Proof of Theorem \ref{eigenvalue}.]
Let $w\in \mathbb{F}_n$ be a nontrivial, balanced, reduced word of length $\ell$, and let $\lambda\in\mathbb{T}$. By Lemma \ref{-1}, there exists a representation $\pi:C^*(\mathbb{F}_n)\to \mathbb{M}_{2\ell}$ for which $-1\in\sigma(\pi(w))$. By Lemma \ref{Sigma_d}, there then exists a representation \\
$\pi':C^*(\mathbb{F}_n)\to \mathbb{M}_{2\ell}$ with $\lambda\in\sigma(\pi'(w))$. 
\end{proof}

\bibliographystyle{amsplain}

\end{document}